\documentclass[a4paper]{amsart}
\usepackage{amssymb,mathtools,booktabs}
\usepackage[hidelinks]{hyperref}
\usepackage[margin=1in]{geometry}
\newtheorem{Theorem}{Theorem}[section]
\newtheorem{Lemma}[Theorem]{Lemma}
\newtheorem{Proposition}[Theorem]{Proposition}

\newtheorem{ExternalTheorem}[Theorem]{Theorem}
\numberwithin{equation}{section}
\newcommand{\E}{\mathbb E}
\newcommand{\Pp}{\mathbb P}
\newcommand{\ind}{\mathbf1}
\DeclareMathOperator{\Aut}{Aut}
\begin{document}

\title[Voronoi volume transforms in the Brownian map]{A marked scheme representation for Voronoi volumes in the Brownian map}
\author{Zhenyao Sun}
\address{School of Mathematics and Statistics, Beijing Institute of Technology, Beijing, China}
\email{zhenyao.sun@gmail.com}
\date{}
\subjclass[2020]{60D05, 60B10, 05C30}
\keywords{Brownian map, Voronoi volumes, random quadrangulations, labelled maps, scheme integrals}
\begin{abstract}
For any fixed number $k\ge3$ of independent volume-sampled sites in the unit-area Brownian map, we represent the negative-power transform of their ordinary Voronoi volumes as a finite sum of marked planar scheme integrals. These integrals arise from the critical asymptotics of generating functions for face-constrained labelled planar maps. To connect them to Voronoi volumes, we use Miermont's zero-delay correspondence to interpret the labelled maps as quadrangulations with distinguished sites and then pass to the Brownian map scaling limit.
\end{abstract}
\maketitle
\section{Introduction}\label{sec:introduction}
\subsection{From nearest sites to a volume transform}
Sampling independent points from a probability measure gives a natural way to divide a metric space into regions: assign a point of the space to its nearest sampled site. The resulting Voronoi volumes sum to one whenever the set of distance ties has zero measure. Their joint distribution records the interaction of the metric and the measure. On a random surface, it describes how a typical collection of sites divides the total area.

The Brownian map is the random compact metric-measure space obtained as the scaling limit of uniform planar quadrangulations. A planar map is a finite connected graph embedded cellularly in the oriented sphere, considered up to orientation-preserving homeomorphism; its faces are the complementary disks, and loops and multiple edges are allowed. A quadrangulation is a planar map whose faces all have degree four. Graph distances are rescaled by a constant times the inverse fourth root of the number of faces. The convergence theorem of Miermont~\cite{miermont} identifies the limiting distance using the labelled-tree encoding, and uniform vertex measure converges to the volume measure of the limit. This discrete approximation suggests studying Brownian map Voronoi volumes by counting quadrangulations with several distinguished vertices. There is, however, no immediate formula for these volumes from the tree encoding.

The Brownian map Voronoi cell-volume problem has already yielded explicit moment formulas. Let $V_i^{(k)}$ denote the volume of the $i$th Voronoi cell generated by $k$ independent volume-sampled sites. In particular, Chapuy~\cite[Theorems~2--3]{chapuy2019} established that
\[
 \E\bigl[(V_1^{(2)})^2\bigr]=\frac13,
 \qquad \E\bigl[V_1^{(3)}V_2^{(3)}V_3^{(3)}\bigr]=\frac1{60}.
\]
These identities give the second moment of a cell in the two-site case and a mixed third moment in the three-site case. They coincide with the corresponding moments of the spacings made by $k-1$ independent uniform points in a unit interval. This agreement motivates Chapuy's conjecture that the entire Voronoi volume vector has the Dirichlet distribution with all parameters equal to one, namely the uniform probability distribution on the simplex of nonnegative coordinates summing to one. In symbols, the proposed law is
\[
 (V_1^{(k)},\ldots,V_k^{(k)})\sim\operatorname{Dirichlet}(1,\ldots,1).
\]

Guitter~\cite{guitter2017} approached the two-site problem through generating functions for planar quadrangulations. The probabilistic question is the division of area between two uniformly sampled vertices in a large quadrangulation. To count that area, his approach uses Miermont's multisource correspondence \cite{miermont2009}, which encodes quadrangulations with distinguished vertices by labelled maps. In its zero-delay form, and under the source conditions recalled in Section~\ref{sec:faces}, a quadrangulation with $k$ ordered sources corresponds to a labelled planar map with $k$ numbered faces, one for each source. These are two different maps: the quadrangulation has $n$ faces of degree four, whereas the labelled map has $n$ edges and $k$ faces whose degrees may vary. The vertex labels record distances to the nearest source, and half the degree of each labelled-map face gives a discrete area observable associated with the corresponding Voronoi cell.

For two sources, Guitter's generating function $F(g,h)$ assigns weights $g$ and $h$ to these two half-face degrees. It is expressed as a sum whose $s$th term counts sources at graph distance $2s$, and thus already sums over all admissible even distances \cite[Section~2, equation~(1)]{guitter2017}. The critical asymptotics of this function lead to a uniform limiting area fraction, or the $\operatorname{Dirichlet}(1,1)$ law for the two fractions \cite[Sections~3--4]{guitter2017}. The passage from these asymptotics to fixed-size coefficients involves heuristic steps; Chapuy~\cite[p.~481, footnote~2]{chapuy2019} describes the calculation as semi-rigorous. It nevertheless suggests retaining a separate area variable for each site when studying more than two cells.

For a fixed $k\ge3$, we therefore consider labelled planar maps $M$ with $k$ numbered faces. Write $\ell$ for the integer vertex labels. Adjacent labels differ by at most one, and the minimum boundary label of each face is exactly one: if $\partial i$ denotes the boundary vertices of face $i$, then
\[
 \begin{gathered}
 \ell:V(M)\to\mathbb Z,\qquad
 |\ell(v)-\ell(w)|\le1\quad(vw\in E(M)),\\
 \min_{v\in\partial i}\ell(v)=1\quad(1\le i\le k).
 \end{gathered}
\]
For such a map $M$, let
\[
 a_i(M)=\frac12\deg_M(i)
\]
be half the boundary degree of face $i$. Boundary sides are counted with multiplicity: an edge incident to the same face on both sides contributes twice to that face's degree. Consequently,
\[
 \sum_{i=1}^k a_i(M)=|E(M)|.
\]
Two labelled maps represent the same object if an orientation-preserving isomorphism preserves their vertex labels and face numbers. Thus different drawings of the same embedded map are counted only once. We write $[M]$ for an isomorphism class and $\Aut(M)$ for the finite group of its symmetries preserving this information. The factor $1/|\Aut(M)|$ gives each class the reciprocal of its number of symmetries.

Let the positive variable $g_i$ record the area $a_i(M)$. Summing these area weights, with the symmetry factors just described, defines
\begin{equation}\label{eq:face-series-definition}
 \mathcal Z_k(g_1,\ldots,g_k)=
 \sum_{[M]}\frac1{|\Aut(M)|}\prod_{i=1}^k g_i^{a_i(M)}.
\end{equation}
The sum runs over all maps in this class and all their sizes. Since every summand is nonnegative, it is understood as the increasing limit of the sums over maps with at most $N$ edges, initially allowing $+\infty$.

When all variables equal $g$, the identity for the total area gives the ordinary size-generating series
\[
 \mathcal Z_k(g,\ldots,g)=\sum_{n\ge0}A_{n,k}g^n,
 \qquad
 A_{n,k}=\sum_{\substack{[M]\\ |E(M)|=n}}\frac1{|\Aut(M)|}.
\]
Thus $A_{n,k}$ is the sum of the reciprocal symmetry orders of the size-$n$ maps. The counting argument in Lemma~\ref{ms:lem:v8-counting-normalization} gives
\[
 A_{n,k}\sim\frac{2^{-k}}{\sqrt\pi}\,12^n n^{k-7/2}\qquad(n\to\infty).
\]
In particular, $\lim_{n\to\infty}A_{n,k}^{1/n}=12$, so the radius of convergence is $g_c=1/12$. The power $k-7/2>-1$ also shows that the series diverges at that radius. Approaching $g_c$ from below therefore probes large maps: the contribution of any fixed finite set of sizes becomes negligible relative to the full series.

We let the face variables approach this common critical value at possibly different positive rates. Following the normalization of Sun~\cite{sun2026critical}, put
\begin{equation}\label{eq:intro-scaling-constants}
 g_i(\varepsilon)=\frac{1-\lambda_i\varepsilon^4/36}{12},
 \qquad \lambda_i>0,\quad \varepsilon\downarrow0.
\end{equation}
Here $1-12g_i(\varepsilon)=\lambda_i\varepsilon^4/36$ is the relative deviation from the critical value; the fourth power and the factor $36$ fix the scaling convention. For sufficiently small $\varepsilon$, every $g_i(\varepsilon)$ lies in $(0,1/12)$, and the nonnegative series is finite by comparison with the equal-variable series. The critical asymptotic result of Sun~\cite[Theorem~1.1]{sun2026critical} identifies the rescaled limit
\[
 \lim_{\varepsilon\downarrow0}\varepsilon^{4k-10}
 \mathcal Z_k(g_1(\varepsilon),\ldots,g_k(\varepsilon))
\]
as a finite sum of marked planar scheme integrals. The goal of this paper is to identify these integrals with the negative-power transform of Brownian map Voronoi volumes, up to an explicit normalization constant. The analytic critical asymptotics are supplied by that earlier work; the probabilistic identification is proved here. We now define the transform and the scheme integrals before stating the representation.

\subsection{The volume transform and the main result}\label{subsec:definitions}
We study ordinary distance Voronoi cells of independent volume samples in the unit-area Brownian map. The probability statements below average over both the random space and the sampled points.

Let $(M,d,\mu)$ denote the Brownian map with $\mu(M)=1$. Conditional on this space, let $X_1,\ldots,X_k$ be independent with law $\mu$, and set
\[
 C_i^{(k)}=\{z\in M:d(z,X_i)<d(z,X_j)\text{ for every }j\ne i\},
 \qquad V_i^{(k)}=\mu(C_i^{(k)}).
\]
For a finite label set $S$ of size $k$, we use the same definitions with the sites indexed by $S$ and abbreviate $V_i^{(k)}$ to $V_i$.

For a finite label set $S$ of size $k\ge3$, put
\begin{equation}\label{eq:volume-transform-definition}
 F_S(\lambda)=\E\left[\left(\sum_{i\in S}\lambda_iV_i\right)^{5/2-k}\right],
 \qquad \lambda\in(0,\infty)^S.
\end{equation}
Initially the expectation is interpreted in $[0,+\infty]$, with a negative power of zero equal to $+\infty$. The proof below establishes that the strict cells have total volume one, and hence that this transform is finite.

We next recall the marked scheme integrals of Sun~\cite[Section~1.2]{sun2026critical}. Their ingredients describe the critical limit of the labelled-map decomposition: trees attached to a fixed finite map give a scaling profile, and chains of degree-two vertices give Green kernels. We define these analytic ingredients first, then the finite maps and their face marks.

For $\eta>0$ let $a=(4\eta)^{1/4}$ and define
\begin{equation}\label{eq:potential}
 u_\eta(x)=\frac{a^2}{2}\left(1+3\operatorname{csch}^2(ax/2)\right),\qquad x>0.
\end{equation}
This profile solves $u_\eta''=u_\eta^2-\eta$, $u_\eta(\infty)=\sqrt\eta$, and
\begin{equation}\label{eq:potential-expansion}
 u_\eta(x)=\frac6{x^2}+\frac{\eta x^2}{10}-\frac{\eta^{3/2}x^4}{126}+O(x^6),\qquad x\downarrow0.
\end{equation}
For a positive parameter vector $\eta$, write $u_i=u_{\eta_i}$ and $p_i=-u_i'>0$. Let $K_{ij}$ be the positive Green kernel of the Friedrichs operator
\[
 H_{ij}=-\frac{d^2}{dx^2}+u_i+u_j
\]
on $(0,\infty)$. Here the Friedrichs realization is obtained by closing the positive quadratic form
\[
 f\longmapsto\int_0^\infty \bigl(|f'(x)|^2+(u_i(x)+u_j(x))|f(x)|^2\bigr)\,dx,
 \qquad f\in C_c^\infty(0,\infty).
\]
It selects the regular behavior at zero for this singular potential. Thus $K_{ij}$ is the integral kernel of $H_{ij}^{-1}$. In particular, its derivative in the first variable has jump $-1$ at the second variable. The regular solution at zero and the decaying solution at infinity of $H_{ij}f=0$ can be normalized by
\[
 \phi_{ij}(x)\sim x^4/7,\qquad \psi_{ij}(x)\sim x^{-3}\quad(x\downarrow0),
 \qquad \psi_{ij}(\infty)=0.
\]
With these normalizations $\phi_{ij}'\psi_{ij}-\phi_{ij}\psi_{ij}'=1$ and
\[
 K_{ij}(x,y)=\phi_{ij}(\min(x,y))\psi_{ij}(\max(x,y)).
\]

Recall that the cyclic order of incidences at each vertex is part of a planar map. For a finite label set $S$ with $|S|=k\ge3$, a \emph{cubic scheme with faces labelled by $S$} is such a planar map with every vertex of degree three and with its faces bijectively labelled by $S$. Euler's formula and the degree-three condition give $|V(G)|=2k-4$ and $|E(G)|=3k-6$. In particular, for fixed $k$ only finitely many such schemes occur.

Fix such a scheme $G$, and write $V(G)$ and $E(G)$ for its vertex and edge sets. Define the finite set
\[
 \mathcal A_G=\{\alpha:S\longrightarrow E(G):
 \alpha(i)\text{ is incident to face }i\text{ for every }i\in S\}.
\]
Thus each face chooses an incident edge. An edge incident to the same face on both sides occurs only once among that face's possible choices. The map $\alpha$ need not be injective: two distinct faces can choose their common edge.

Let $x=(x_v)_{v\in V(G)}\in(0,\infty)^{V(G)}$ and $z=(z_i)_{i\in S}\in(0,\infty)^S$. For an edge $e$, write $v,w$ for its endpoint incidences and $i,j$ for its incident face labels, allowing $v=w$ and $i=j$. Fix $\alpha\in\mathcal A_G$ and put $S_e(\alpha)=\alpha^{-1}(\{e\})\subseteq\{i,j\}$. We define the edge factor $F_\alpha(e;x,z)$ by the following four cases: an unmarked edge, one mark on an edge separating different faces, one mark on an edge with the same face on both sides, and two marks on a shared edge. If $S_e(\alpha)=\varnothing$, put
\[
 F_\alpha(e;x,z)=K_{ij}(x_v,x_w).
\]
If $i\ne j$ and $S_e(\alpha)=\{h\}$, where $h\in\{i,j\}$, put
\[
 F_\alpha(e;x,z)=K_{ij}(x_v,z_h)p_h(z_h)K_{ij}(z_h,x_w).
\]
If $i=j$ and $\alpha(i)=e$, put
\[
 F_\alpha(e;x,z)=2K_{ii}(x_v,z_i)p_i(z_i)K_{ii}(z_i,x_w).
\]
If $i\ne j$ and $\alpha(i)=\alpha(j)=e$, put
\begin{align*}
 F_\alpha(e;x,z)={}&K_{ij}(x_v,z_i)p_i(z_i)K_{ij}(z_i,z_j)p_j(z_j)K_{ij}(z_j,x_w)\\
 &+K_{ij}(x_v,z_j)p_j(z_j)K_{ij}(z_j,z_i)p_i(z_i)K_{ij}(z_i,x_w).
\end{align*}
An arbitrary ordering of the two endpoint incidences of $e$ may be used: symmetry of $K_{ij}$ preserves the first three cases and interchanges the two summands in the fourth. Both summands remain for a loop, even if their values coincide. Distinct parallel edges remain distinct elements of $E(G)$.

Let $\mathcal G_S$ be the set of isomorphism classes of cubic schemes with faces labelled by $S$, and let $\Aut(G)$ be the group of orientation-preserving, face-label-preserving automorphisms. We define
\begin{equation}\label{eq:scheme-sum-definition}
 \begin{split}
 \mathcal I_S(\eta)={}&\sum_{[G]\in\mathcal G_S}\frac1{|\Aut(G)|}
 \sum_{\alpha\in\mathcal A_G}
 \int_{(0,\infty)^{V(G)}}\int_{(0,\infty)^S}\\
 &\qquad\prod_{e\in E(G)}F_\alpha(e;x,z)\,
 \prod_{i\in S}dz_i\prod_{v\in V(G)}dx_v.
 \end{split}
\end{equation}
We allow $+\infty$ until finiteness is proved.

\begin{Theorem}[The volume-transform representation]\label{ms:lem:general-bridge}
For every finite label set $S$ with $|S|=k\ge3$ and positive parameter vector $\lambda=(\lambda_i)_{i\in S}$, put
\[
 \alpha_k=k-\frac52,\qquad
 C_k=\frac{2^{-k}36^{\alpha_k}\Gamma(\alpha_k)}{\sqrt\pi}.
\]
Then the integral is finite and
\begin{equation}\label{eq:bridge}
 3^{3k-6}\mathcal I_S(\lambda/4)=C_kF_S(\lambda),
\end{equation}
where $\lambda/4=(\lambda_i/4)_{i\in S}$. Both sides of \eqref{eq:bridge} depend real-analytically on the positive parameters $\lambda_i$. In particular,
\begin{equation}\label{eq:diagonal}
 \mathcal I_S((t)_{i\in S})
 =\frac{\Gamma(k-5/2)}{2^k3^{k-1}\sqrt\pi}\,t^{5/2-k},\qquad t>0.
\end{equation}
\end{Theorem}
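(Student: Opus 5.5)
Combining the critical asymptotics of Sun~\cite[Theorem~1.1]{sun2026critical} with a Tauberian-type evaluation of the same series through the Brownian map limit, I would compute $\lim_{\varepsilon\downarrow0}\varepsilon^{4k-10}\mathcal Z_k(g_1(\varepsilon),\ldots,g_k(\varepsilon))$ in two ways. On the analytic side, \cite[Theorem~1.1]{sun2026critical} identifies this limit as $3^{3k-6}\mathcal I_S(\lambda/4)$. I would check the normalization there (the factor $3$ per scheme edge and the rescaling $\eta=\lambda/4$ coming from \eqref{eq:intro-scaling-constants}) and take it as given. On the probabilistic side, I would write $\mathcal Z_k=\sum_n A_{n,k}(12)^{-n}\E_n\bigl[\prod_i(1-\lambda_i\varepsilon^4/36)^{a_i}\bigr]$, where $\E_n$ is the expectation under the probability on size-$n$ labelled maps weighted by $1/|\Aut|$.

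The key step is to show that under $\E_n$ the vector $(a_i/n)_{i}$ converges in law to $(V_i^{(k)})_i$. Via Miermont's zero-delay correspondence (Section~\ref{sec:faces}), a size-$n$ labelled map with $k$ faces corresponds to a quadrangulation with $n$ faces and $k$ sources satisfying the source conditions, weighted essentially uniformly. The half-degree $a_i$ counts the quadrangulation faces assigned to source $i$. Up to $o(n)$ errors it therefore equals the number of vertices strictly closer to source $i$ than to the others. I would then use Miermont's convergence~\cite{miermont} together with convergence of uniform vertex measure. The sources are nearly uniform vertices, since the delay/source conditions (distances even, labels minimal $=1$) only bias by a factor converging to a constant, which is absorbed in $A_{n,k}$. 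Passing to the limit then needs two facts: strict Voronoi cells in the Brownian map have total mass one (the tie set has $\mu$-measure zero), and boundary effects are negligible. For the tie set I would use that $d(z,X_i)-d(z,X_j)$ has, for $\mu$-a.e. $z$, a continuous law, e.g.\ by rerooting invariance and the absolute continuity of distances between typical points. This discrete-to-continuum identification, including the control of tie regions uniformly in $n$, is the main obstacle.

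Given that, the Abelian computation is routine. Using $A_{n,k}12^{-n}\sim\frac{2^{-k}}{\sqrt\pi}n^{k-7/2}$ from Lemma~\ref{ms:lem:v8-counting-normalization}, and $\prod_i(1-\lambda_i\varepsilon^4/36)^{a_i}\approx\exp(-\varepsilon^4 n\sum_i\lambda_i V_i/36)$, the sum becomes a Riemann sum in $t=n\varepsilon^4$:
\[
 \varepsilon^{4k-10}\mathcal Z_k\to\frac{2^{-k}}{\sqrt\pi}\int_0^\infty t^{k-7/2}\E\bigl[e^{-t\sum\lambda_iV_i/36}\bigr]dt=C_kF_S(\lambda).
\]
This is justified by dominated convergence: small $n$ are negligible because $k-7/2>-1$, and for large $n$ the bound $\prod g_i^{a_i}\le(\max g_i)^n$ gives domination. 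Fatou gives $F_S<\infty$, because $\sum\lambda_iV_i\ge\min\lambda_i$ once total volume is one, which yields finiteness of both sides. Equating the two evaluations gives \eqref{eq:bridge}. Real-analyticity follows on the right from $F_S(\lambda)=\E[(\sum\lambda_iV_i)^{-\alpha_k}]$ with $\sum\lambda_iV_i$ bounded below on compact subsets of complex neighbourhoods, so the left is analytic as well. For \eqref{eq:diagonal}, put $\lambda_i=4t$; then $\sum\lambda_iV_i=4t$, so $F_S=(4t)^{-\alpha_k}$. Hence $\mathcal I_S(t)=3^{6-3k}C_k(4t)^{5/2-k}$, and simplifying $36^{\alpha_k}4^{-\alpha_k}=9^{\alpha_k}=3^{2k-5}$ gives $\Gamma(k-5/2)/(2^k3^{k-1}\sqrt\pi)\,t^{5/2-k}$.
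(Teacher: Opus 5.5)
\textbf{Verdict: genuine gap.} The overall architecture matches the paper's. You compute $\varepsilon^{4k-10}\mathcal Z_k(g(\varepsilon))$ once from Theorem~\ref{ext:face-constrained-critical-limit}, and once by a Riemann-sum and gamma-integral argument built on a fixed-size face-area limit and the coefficient asymptotics of Lemma~\ref{ms:lem:v8-counting-normalization}. Your Abelian step is correct. Your holomorphy argument for real analyticity is a valid alternative to the paper's binomial power series, and the diagonal computation is correct.

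\textbf{The gap: the same-colour conditioning.} The problem is in the step you call key, the convergence of $(a_i/n)_i$ under the inverse-automorphism law. That law is exactly the law of a uniform quadrangulation with $k$ uniform sources \emph{conditioned} on the sources being distinct and in one bipartite colour class; the root is then forgotten via the free action of the automorphism group on the $4n$ oriented roots. (The minimum-label normalization imposes nothing.) You dispose of this by saying the condition ``only biases by a factor converging to a constant, which is absorbed in $A_{n,k}$''.
\begin{itemize}
\item That handles only the normalization, $\Pp(A_n)\to2^{1-k}$. This limit itself needs proof, and Lemma~\ref{ms:lem:v8-counting-normalization} obtains it from the very result you are skipping.
\item It does not handle the conditional law. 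The density of the conditioned law with respect to the unconditioned one is $\ind_{A_n}/\Pp(A_n)$, which takes only the values $0$ and roughly $2^{k-1}$; it is not close to a constant.
\item Conditioning on an event of probability bounded below preserves the marked Brownian-map limit only if $\ind_{A_n}$ is asymptotically independent of the marked geometry.
\item Miermont's convergence theorem does not give this. The colour bits are functions of the same discrete map, and a priori could correlate with macroscopic features such as the Voronoi partition.
\end{itemize}

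\textbf{How the paper fills it.} Section~\ref{sec:parity} supplies exactly this missing ingredient.
\begin{itemize}
\item In the pointed CVS encoding, colour differences are label parities.
\item With high probability each sampled vertex has a private terminal segment of its root path of length of order $\sqrt n$. One resamples $\lceil\log n\rceil$ increments on each such segment.
\item This makes the parities fair and independent up to total variation $k3^{-m_n}/2$, while changing labels by only $O(\log n)=o(n^{1/4})$.
\item Theorem~\ref{ext:encoding-metric-convergence} identifies every subsequential distance limit as $D^*$, a function of $(e,Z)$ alone. Hence the resampling leaves the marked GHP limit unchanged (Lemmas~\ref{ms:lem:parity-geometry} and~\ref{ms:lem:parity-mixing}, Proposition~\ref{ms:lem:parity-decoupling}).
\end{itemize}

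\textbf{Secondary points.} The difficulty you single out, tie control uniform in $n$, is routine once the limiting bisectors are null. Lemma~\ref{ms:lem:cell-mass-continuity} bounds the discrete tie mass by $e_n+\mu(T_\delta)$ in any GHP coupling. You also assert without argument that $a_i$ differs from the strict-cell vertex count by $o(n)$. The paper proves $|a_i-W_{n,i}|\le2T_n+D_k+1$ from the pruning/chain decomposition: pruned trees contribute equally many edges and vertices, and only chain and scheme vertices can be unbalanced, and these are ties.
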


Thus the negative-power volume transform is a finite, explicitly normalized sum of positive integrals. The identity holds for every positive choice of the parameters, and both expressions are real analytic on that entire domain. The last formula evaluates the integral when all parameters coincide; it does not determine its values off the diagonal.

In particular, the representation does not establish Chapuy's proposed Dirichlet law.

\subsection{Proof overview}
The analytic and probabilistic descriptions concern the same rescaled generating series. With the parameters in \eqref{eq:intro-scaling-constants} and the constants in Theorem~\ref{ms:lem:general-bridge}, we have
\begin{equation}\label{eq:two-limits}
\begin{aligned}
 \varepsilon^{4k-10}\mathcal Z_k(g(\varepsilon))
 &\longrightarrow 3^{3k-6}\mathcal I_{\{1,\ldots,k\}}(\lambda/4)
 &&\text{(analytic)},\\
 \varepsilon^{4k-10}\mathcal Z_k(g(\varepsilon))
 &\longrightarrow C_k\E\left(\sum_{i=1}^k\lambda_iV_i^{(k)}\right)^{-\alpha_k}
 &&\text{(probabilistic)}.
\end{aligned}
\end{equation}
Here $g(\varepsilon)=(g_1(\varepsilon),\ldots,g_k(\varepsilon))$, and $\mathcal I_S$ is the marked scheme integral just defined. The first convergence is the critical asymptotic result of Sun~\cite[Theorem~1.1]{sun2026critical}. We prove the second by relating the generating series to Brownian map Voronoi volumes. Comparing the two limits gives the representation.

The probabilistic identification relies on two geometric ingredients. The first concerns the bipartite structure of quadrangulations. Miermont's zero-delay correspondence requires the distinguished vertices to belong to the same bipartite color class, whereas the Voronoi sites in the Brownian map are sampled independently from its volume measure. To reconcile these settings, we prove that the relative colors of uniformly sampled vertices converge to independent fair bits, independently of the limiting marked Brownian map. Thus the color restriction does not alter the limiting marked geometry. Parity constraints also occur in Chapuy's enumeration \cite[Lemma~11 and the proof of Lemma~6]{chapuy2019}; the joint independence from the geometry is what is needed here.

The second ingredient concerns the relation between face degrees and Voronoi volumes. Under Miermont's correspondence, half the degree of a labelled-map face records a discrete area associated with the corresponding source. This area need not agree exactly with the size of the strict Voronoi cell, because vertices equidistant from two or more sources may contribute differently. We show that the discrepancies vanish after normalization, using the fact that distance ties have zero volume in the Brownian map. Together with the exact source and rooting counts, these two ingredients yield the probabilistic limit in \eqref{eq:two-limits}.

The proof occupies the remainder of the paper. Section~\ref{sec:ties} establishes that distance ties have zero volume in the Brownian map and proves continuity of Voronoi cell volumes under marked Gromov--Hausdorff--Prokhorov convergence. Section~\ref{sec:parity} shows that the bipartite color restriction on the distinguished vertices required by Miermont's zero-delay correspondence does not affect the limiting marked geometry. Section~\ref{sec:faces} establishes convergence of the normalized face half-degrees to Brownian map Voronoi volumes and determines the corresponding enumeration constant. Section~\ref{sec:comparison} compares the probabilistic and analytic limits and proves Theorem~\ref{ms:lem:general-bridge}.

\section{Null bisectors and continuity of cell masses}\label{sec:ties}
The probabilistic identification requires strict Voronoi cells to exhaust the volume and discrete tie masses to disappear. We obtain the necessary null bisectors from the atomlessness of the distance from a volume-sampled center; the same input gives null sphere boundaries.

The distance-profile result below is the atomlessness consequence of the continuous density profile in \cite[Proposition~3.7, p.~432]{legall2014}.
\begin{ExternalTheorem}[Atomless distance profile]\label{ext:distance-profile}
If $Y$ is sampled from $\mu$, then almost surely the measure
$\nu_Y(A)=\int_M\ind_{\{d(Y,z)\in A\}}\,\mu(dz)$ on $[0,\infty)$ has no atoms.
\end{ExternalTheorem}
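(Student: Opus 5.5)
The plan is to reduce the statement to the case where the sampled point is the distinguished point of the standard construction of the Brownian map. There, distances to that point are explicit functions of the label process. Atomlessness then follows from the existence of an occupation density for the labels.

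First I fix the construction. Let $(\mathbf e,Z)$ be the head of the Brownian snake driven by a normalized excursion $\mathbf e$, that is, Brownian labels indexed by the CRT. The Brownian map $(M,d,\mu)$ is a quotient of $[0,1]$ under a projection $p$, and $\mu$ is the image of Lebesgue measure on $[0,1]$ under $p$. The distinguished point $x_*$ is the image of the a.s.\ unique minimizer $s_*$ of $Z$. The key identity is that distances to $x_*$ are given by the labels:
\[
d(x_*,p(s))=Z_s-Z_{s_*}=Z_s-\min Z .
\]
Hence $\nu_{x_*}$ is the image of Lebesgue measure on $[0,1]$ under $s\mapsto Z_s-\min Z$, which is the occupation measure of the labels shifted by their minimum.

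Next I show this occupation measure is atomless. It suffices that the occupation measure of $Z$ has a density, since shifting by the constant $\min Z$ does not affect this. Existence of a continuous density is exactly the content of \cite[Proposition~3.7]{legall2014}: the local time of the Brownian snake along the CRT, equivalently the density of ISE. I take that continuity as given. A measure with a continuous density, and indeed any absolutely continuous measure, has no atoms. So almost surely $\nu_{x_*}$ is atomless.

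Finally I transfer the result from $x_*$ to an independent point $Y\sim\mu$. Here I use the invariance of the Brownian map under rerooting (Le Gall, 2010): conditionally on $(M,d,\mu)$, the point $x_*$ is distributed according to $\mu$. Consequently $(M,d,\mu,x_*)$ and $(M,d,\mu,Y)$ have the same law as random pointed metric-measure spaces. The event that $\nu_\cdot$ has an atom is a measurable functional of the pointed space. One way to see this is to write it as the event that for some rational $\delta$ and every $n$ there is a dyadic interval of length $2^{-n}$ with $\nu$-mass at least $\delta$. Since this event has probability zero for $x_*$, it has probability zero for $Y$.

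I expect the main obstacle to be this last transfer. The measurability of the atom event is routine. The real input is the rerooting invariance, which must be invoked in the form giving equality in law of the pointed spaces, and not merely equality in law of the unpointed spaces.
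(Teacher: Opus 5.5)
Your proposal is correct and is essentially the paper's argument: the paper imports the continuous density of the distance profile from Le Gall's Proposition~3.7 and observes that a measure with a density has no atoms. You go further by unpacking that citation into its standard ingredients — the label occupation (ISE) density at the distinguished point $x_*$, where $d(x_*,\cdot)=Z-\min Z$, and re-rooting invariance to replace $x_*$ by a $\mu$-sampled point — which reconstructs the cited result rather than taking a different route.
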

\begin{Lemma}[Null distance ties]\label{ms:lem:bisectors}
Almost surely, $\mu\{z:d(z,X_i)=d(z,X_j)\}=0$ whenever $i\ne j$. The strict Voronoi cells therefore partition $M$ up to a null set.
For every finite collection of independent volume-sampled centers, all spheres centered at any of them have zero volume almost surely.
\end{Lemma}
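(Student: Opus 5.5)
The plan is to derive both statements from Theorem~\ref{ext:distance-profile} by conditioning on the space and on all centers except one, then applying Fubini.

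\emph{Spheres.} Fix a center $X_i$. Conditional on $(M,d,\mu)$, the point $X_i$ has law $\mu$, so Theorem~\ref{ext:distance-profile} applies to $Y=X_i$. Almost surely, $\nu_{X_i}$ has no atoms, which gives
\[
\mu\{z:d(z,X_i)=r\}=\nu_{X_i}(\{r\})=0\quad\text{for every }r\ge0
\]
simultaneously. The statement holds for all $r$ at once, so no countability issue arises. A finite union over $i$ then gives the sphere claim for any finite collection of centers.

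\emph{Ties.} Fix $i\ne j$. Conditional on the space and on $X_j$, the point $X_i$ is an independent $\mu$-sample. By Fubini,
\[
\E\,\mu\{z:d(z,X_i)=d(z,X_j)\}
=\E\int_M\!\int_M \ind_{\{d(z,x)=d(z,X_j)\}}\,\mu(dx)\,\mu(dz).
\]
For fixed $z$, the inner integral is $\nu_z(\{d(z,X_j)\})$. Theorem~\ref{ext:distance-profile} is stated for a volume-sampled $Y$. Since $z$ is integrated against $\mu$, it is itself a $\mu$-sample independent of $X_j$, so for $\mu$-a.e.\ $z$ the measure $\nu_z$ is atomless almost surely. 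The inner integral therefore vanishes for $\mu$-a.e.\ $z$, and so does the expectation.

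One measurability point needs care. The set $\{(M,z): \nu_z \text{ atomless}\}$ should be measurable, for instance via the Gromov--Hausdorff--Prokhorov framework and the identity $\sup_r\nu_z(\{r\})=\lim_{\delta\to0}\sup_r\nu_z([r,r+\delta])$, with the supremum taken over rational $r$. Alternatively, one can avoid this by applying the theorem with $Y=X_i$ and swapping the roles of $z$ and $X_i$, using the symmetry of $d$. This gives $\mu\{z:d(z,X_i)=d(z,X_j)\}=0$ almost surely, and a finite union over pairs gives the first claim.

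\emph{Partition.} The complement of $\bigcup_i C_i^{(k)}$ is contained in the union of the pairwise tie sets. Indeed, if no strict minimizer exists among the finitely many distances, two of them tie at the minimum. The complement is therefore null, the cells are disjoint by definition, and $\sum_i V_i^{(k)}=1$ almost surely.

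The main subtlety is the Fubini step: we must apply the atomless profile at a "random point $z$" rather than at a site. This requires the measurability of $(M,z)\mapsto\nu_z$, which I would justify via the continuity of $d$ and weak measurability of $z\mapsto\nu_z$. Everything else is routine.
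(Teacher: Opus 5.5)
Your proposal is correct and is essentially the paper's proof. The paper also introduces an auxiliary volume sample $Y$, which plays the role of your integrated point $z$. Conditional on $(M,d,\mu,Y)$, the distance $d(Y,X_i)$ has the atomless law $\nu_Y$ and is independent of $d(Y,X_j)$, so a tie has conditional probability zero. Tonelli's theorem then turns this into $\E\,\mu\{z:d(z,X_i)=d(z,X_j)\}=0$. The sphere statement is read off directly from Theorem~\ref{ext:distance-profile}, followed by a finite union, exactly as you do.
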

\begin{proof}
Apply Theorem~\ref{ext:distance-profile}, and sample an additional point $Y$ independently from $\mu$, conditional on $(M,d,\mu)$. Conditional on $(M,d,\mu,Y)$, the distances $d(Y,X_i)$ and $d(Y,X_j)$ are independent with common atomless distribution $\nu_Y$. Consequently,
\[
 \Pp\bigl(d(Y,X_i)=d(Y,X_j)\mid M,d,\mu,Y\bigr)=0.
\]
Tonelli's theorem gives
\[
\begin{aligned}
 &\Pp\bigl(d(Y,X_i)=d(Y,X_j)\bigr)\\
 &\qquad=\E\left[\mu\bigl(\{z\in M:d(z,X_i)=d(z,X_j)\}\bigr)\right].
\end{aligned}
\]
The volume on the right is nonnegative and therefore vanishes almost surely. A finite union over pairs proves the partition statement. By Theorem~\ref{ext:distance-profile}, every sphere centered at any one independent volume-sampled point has zero volume almost surely. A finite union of the exceptional events proves this simultaneously for any finite collection of sampled centers.
\end{proof}

We will use convergence of compact metric spaces with probability measures and finitely many ordered marks in the marked Gromov--Hausdorff--Prokhorov topology. One characterization uses correspondences between the spaces whose distortions tend to zero, which contain all marked pairs, and couplings of the measures whose mass outside those correspondences tends to zero. Distortion measures the largest discrepancy between distances of two paired points. The following deterministic fact isolates the continuity needed for Voronoi cells: away from ties, a small perturbation of distances cannot change the nearest site.
\begin{Lemma}[Continuity of Voronoi masses]\label{ms:lem:cell-mass-continuity}
Fix $k\ge1$. Let
\[
(M_n,d_n,\mu_n;x_{n,1},\ldots,x_{n,k})
\quad\text{and}\quad
(M,d,\mu;x_1,\ldots,x_k)
\]
be compact metric spaces equipped with Borel probability measures and $k$ ordered marks. Suppose the marked spaces converge in the Gromov--Hausdorff--Prokhorov topology and
\[
\mu\{z\in M:d(z,x_i)=d(z,x_j)\}=0\qquad(1\le i<j\le k).
\]
Define the strict cells $C_{n,i}=\{z\in M_n:d_n(z,x_{n,i})<d_n(z,x_{n,j})\text{ for every }j\ne i\}$ and $C_i$ by the corresponding inequalities on $M$. Then
\[
(\mu_n(C_{n,1}),\ldots,\mu_n(C_{n,k}))
\longrightarrow(\mu(C_1),\ldots,\mu(C_k)).
\]
Moreover, the $\mu_n$-mass of points having more than one nearest mark tends to zero. Any measurable assignment of these tied points to their minimizing marks therefore gives the same limiting vector.
\end{Lemma}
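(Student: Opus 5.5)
The statement to prove is the last one, Lemma~\ref{ms:lem:cell-mass-continuity}. It is deterministic, and I would argue it through a coupling realising the marked GHP convergence. By the characterisation recalled above, there are correspondences $R_n\subseteq M_n\times M$ and couplings $\pi_n$ of $\mu_n$ and $\mu$ with the following properties: $(x_{n,i},x_i)\in R_n$ for every $i$, $\operatorname{dis}(R_n)=:\delta_n\to0$, and $\pi_n((M_n\times M)\setminus R_n)\to0$. For $(z_n,z)\in R_n$, each distance to a mark is then perturbed by at most $\delta_n$:
\[
|d_n(z_n,x_{n,i})-d(z,x_i)|\le\delta_n\qquad(1\le i\le k).
\]

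The key observation is a margin argument. For $\delta>0$, put
\[
G_i(\delta)=\{z\in M:d(z,x_i)<d(z,x_j)-2\delta\text{ for every }j\ne i\}.
\]
If $(z_n,z)\in R_n$, $z\in G_i(\delta)$ and $\delta_n\le\delta$, then $d_n(z_n,x_{n,i})<d_n(z_n,x_{n,j})$ for all $j\ne i$, so $z_n\in C_{n,i}$. Conversely, if $z_n\in C_{n,i}$ and $(z_n,z)\in R_n$, then $d(z,x_i)<d(z,x_j)+2\delta_n$ for all $j\ne i$, so $z$ lies in the closed enlargement
\[
\overline G_i(\delta)=\{z:d(z,x_i)\le d(z,x_j)+2\delta\ \forall j\ne i\}
\]
once $\delta_n\le\delta$. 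Integrating these two inclusions against $\pi_n$ gives, for $n$ large,
\[
\mu(G_i(\delta))-\pi_n(R_n^c)\le\mu_n(C_{n,i})\le\mu(\overline G_i(\delta))+\pi_n(R_n^c).
\]

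Now let $n\to\infty$ and then $\delta\downarrow0$. The sets $G_i(\delta)$ increase to $C_i$. The sets $\overline G_i(\delta)$ decrease to $\{z:d(z,x_i)\le d(z,x_j)\ \forall j\ne i\}$, which is contained in $C_i\cup\bigcup_{j\ne l}\{d(\cdot,x_j)=d(\cdot,x_l)\}$. By the hypothesis on ties, this differs from $C_i$ by a $\mu$-null set. Monotone continuity of $\mu$ then yields $\mu_n(C_{n,i})\to\mu(C_i)$.

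For the tie statement, every point of $M_n$ has at least one nearest mark. The tied set is therefore $M_n\setminus\bigcup_iC_{n,i}$, whose mass is
\[
1-\sum_i\mu_n(C_{n,i})\longrightarrow1-\sum_i\mu(C_i)=0,
\]
again using that $\mu$ gives no mass to ties. Any measurable assignment of the tied points changes each coordinate by at most this vanishing mass, so it has the same limit.

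The only delicate step is the existence of the correspondence and coupling with the stated properties, including that the marks are paired. This is standard: it follows from the equivalent formulation of marked GHP convergence via embeddings into a common metric space together with Strassen's theorem, which converts Prokhorov closeness into a coupling. I would cite it rather than reprove it; everything else is elementary.
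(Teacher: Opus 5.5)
Your proof is correct and is essentially the paper's argument. You both realize the marked convergence by mark-preserving correspondences with vanishing distortion and couplings with vanishing mass off the correspondence, use a $\delta$-margin so that nearest-mark comparisons away from near-ties survive the perturbation, and let $n\to\infty$ before $\delta\downarrow0$. The difference is only in packaging. The paper uses the single near-tie set $T_\delta=\bigcup_{i<j}\{|d(\cdot,x_i)-d(\cdot,x_j)|\le\delta\}$ to get $|\mu_n(C_{n,i})-\mu(C_i)|\le\pi_n(\mathcal R_n^c)+\mu(T_\delta)$, and bounds the tied mass directly by the same quantity. You instead sandwich each cell between inner and outer margin sets, and obtain the tied mass as $1-\sum_i\mu_n(C_{n,i})$.
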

\begin{proof}
For $k=1$ the unique cell is the whole space and there are no ties. Assume $k\ge2$. Realize marked Gromov--Hausdorff--Prokhorov convergence by correspondences $\mathcal R_n\subset M_n\times M$ containing each pair $(x_{n,i},x_i)$, with distortions $\varepsilon_n\to0$, and couplings $\pi_n$ of $\mu_n,\mu$ for which $e_n=\pi_n(\mathcal R_n^c)\to0$. Thus for $(z_n,z)\in\mathcal R_n$,
\[
|d_n(z_n,x_{n,i})-d(z,x_i)|\le\varepsilon_n
\quad(1\le i\le k).
\]
For $\delta>0$ put
\[
T_\delta=\bigcup_{i<j}\{z\in M:|d(z,x_i)-d(z,x_j)|\le\delta\}.
\]
As $\delta\downarrow0$ these sets decrease to the finite union of the pairwise tie sets, so $\mu(T_\delta)\to0$. If $2\varepsilon_n<\delta$, then for every $(z_n,z)\in\mathcal R_n$ with $z\notin T_\delta$ all pairwise distance comparisons have the same strict sign in the two spaces. In particular both points have the same unique nearest mark. Integration against $\pi_n$ gives simultaneously for all $i$
\[
|\mu_n(C_{n,i})-\mu(C_i)|\le e_n+\mu(T_\delta).
\]
The set of $z_n$ with more than one nearest mark has mass at most the same bound, because it cannot occur on such a good coupled pair. First let $n\to\infty$, then let $\delta\downarrow0$. This proves both limits. Changing an assignment of tied points changes any one cell mass by at most their total mass, which proves the final assertion.
\end{proof}

\section{Bipartite colors and the limiting geometry}\label{sec:parity}
The zero-delay correspondence requires all sources to have the same bipartite color, whereas our limiting sites are independent samples from Brownian-map volume. We must therefore prove that imposing the color condition preserves the marked metric-measure limit. A uniform limiting vector of relative color bits would determine the probability of that condition but would not show its independence from the geometry.

Our mechanism uses the pointed tree encoding. Each sampled vertex has, with high probability, a terminal part of its root path used by none of the other sampled vertices. Resampling a logarithmic number of increments on each such private path randomizes the label parities independently. The change in rescaled labels vanishes, and the theorem identifying the limiting distance from those labels shows that the marked geometry is asymptotically unchanged. We first establish this geometric stability, then deduce parity independence and the conditional limit.

\subsection{The pointed tree encoding}
To remove the color restriction without changing the limiting geometry, we use one common tree encoding for the metric and the label parities. The two source results below fix the same encoding, its normalization, and the identification of every subsequential distance limit.

Let $e$ be a normalized Brownian excursion on $[0,1]$. Conditionally on $e$, let $Z$ be the continuous centered Gaussian process with
\[
\E[Z_sZ_t\mid e]=\min_{s\wedge t\le u\le s\vee t}e_u.
\]
The tree distance encoded by $e$ is
\[
d_e(s,t)=e_s+e_t-2\min_{s\wedge t\le u\le s\vee t}e_u.
\]
For $s\le t$ set
\[
D^\circ(s,t)=Z_s+Z_t-2\max\left\{\min_{u\in[s,t]}Z_u,
\min_{u\in[0,s]\cup[t,1]}Z_u\right\},
\]
and extend symmetrically. For $s,t\in[0,1]$, define
\[
D^*(s,t)=\inf\left\{\sum_{h=1}^m D^\circ(a_h,b_h):\begin{array}{l}
m\in\mathbb N,\quad m\ge1,\quad a_1=s,\quad b_m=t,\\
a_h,b_h\in[0,1]\quad(1\le h\le m),\\
d_e(b_h,a_{h+1})=0\quad(1\le h<m)
\end{array}\right\}.
\]
Thus one pays $D^\circ$ for each link and may move freely between times representing the same tree vertex. The quotient by $D^*=0$, with the induced distance and the pushforward of Lebesgue measure, is the unit-area Brownian map in the normalization used here.

For $n\ge1$, let $T_n$ be a uniform rooted plane tree with $n$ edges. Give its root label zero and, conditionally on the tree, give its edges independent uniform label increments in $\{-1,0,1\}$, measured from parent to child. Write $L(v)$ for the resulting vertex labels, and let $\epsilon_n$ be an independent uniform sign in $\{-1,1\}$, called the root-orientation bit.

We use the following deterministic pointed Cori--Vauquelin--Schaeffer coding. List the tree corners in contour order, beginning at the root corner. The successor of a corner is the first subsequent corner in cyclic order with label one less; if there is none, its successor is an extra vertex $v_*$. Join each corner to its successor by an arc, drawn without crossings in the complement of the tree, and then erase the tree edges. Root the resulting map at the arc from the root corner to its successor: orient it toward the root corner when $\epsilon_n=1$ and away from that corner when $\epsilon_n=-1$. Thus the bit changes only the orientation of the root edge.

The coding properties below are given in \cite[Sections 4.1 and 5.4.2--5.4.3, Theorem 5.5, Corollaries 5.6--5.7, and Proposition 5.8]{legallmiermont2012}.
\begin{ExternalTheorem}[Pointed tree coding]\label{ext:pointed-tree-coding}
For every rooted plane tree $T$ with $n\ge1$ edges, integer-valued labels $L$ that are zero at the root and change by at most one along each edge, and sign $\epsilon\in\{-1,1\}$, the preceding construction gives a well-defined rooted quadrangulation $Q$ with $n$ faces and vertex set $V(T)\cup\{v_*\}$. For each $n\ge1$, this construction is a bijection from these labelled rooted plane trees together with a sign onto rooted quadrangulations with $n$ faces together with a distinguished vertex $v_*$. Both sides are taken up to orientation-preserving isomorphism respecting the root and the specified labels or distinguished vertex. For every $v\in V(T)$,
\[
d_Q(v,v_*)=L(v)-\min_{w\in V(T)}L(w)+1.
\]
For the random inputs $(T_n,L,\epsilon_n)$ specified above, $Q$ is uniform among rooted quadrangulations with $n$ faces, and $v_*$ is conditionally uniform among its $n+2$ vertices.
\end{ExternalTheorem}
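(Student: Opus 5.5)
The statement just displayed is the pointed tree coding (Theorem~\ref{ext:pointed-tree-coding}), which the paper imports from \cite{legallmiermont2012}. Any proof would therefore re-derive the Cori--Vauquelin--Schaeffer bijection in its pointed form. I would organize it in four steps.

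\emph{Well-definedness.} Work with the $2n$ corners of $T$ in contour order and the cyclic sequence of their labels. Each step of this sequence changes by $-1$, $0$ or $+1$, so any corner whose label exceeds $\min L$ has a first later corner, in cyclic order, with label one smaller. Corners with minimal label are joined to the extra vertex $v_*$. The successor arcs can be drawn without crossings by a parenthesis-type argument. Suppose the arcs from corners $c$ and $c'$ interleaved in cyclic order. Then the intermediate-value property of the $\pm1$-step contour labels would produce an earlier successor for one of them.

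\emph{Faces and size.} Next I would count faces. The tree has $n$ edges, and each edge has two sides. Adding the arcs and erasing the tree edges should leave one face per tree edge. Each such face has degree four and is of one of two types, according as the edge increment is $\pm1$ or $0$. This gives a quadrangulation $Q$ with $n$ faces and vertex set $V(T)\cup\{v_*\}$. A check via Euler's formula confirms it: $2n$ edges, $n+2$ vertices, $n$ faces.

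\emph{Distance formula.} Each arc decreases the label by one, so iterating successors gives a path to $v_*$ of length $L(v)-\min L+1$. This bounds $d_Q(v,v_*)$ from above. For the lower bound, every edge of $Q$ joins vertices whose shifted labels $L-\min L+1$ differ by exactly one, with $v_*$ assigned shifted label $0$. Hence the shifted label is $1$-Lipschitz along $Q$-paths, and equality follows.

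\emph{Bijectivity and uniformity.} The main obstacle is the inverse construction. Given a rooted quadrangulation with a distinguished vertex, set $L=d(\cdot,v_*)$ shifted so that the root corner's vertex has label $0$. In each face, take the edge determined by the two face types: a confluent face has labels $\ell,\ell+1,\ell,\ell+1$, and a simple face has labels $\ell,\ell+1,\ell+2,\ell+1$. One must show that these edges form a spanning tree on $V(Q)\setminus\{v_*\}$. I would argue connectivity and acyclicity by the standard Schaeffer argument and count edges to conclude it is a tree. The two orientations of the root arc account for the sign $\epsilon$. Uniformity then follows by counting. The number of labelled trees times signs is $2\cdot 3^n\mathrm{Cat}_n$, and this equals $(n+2)$ times the number of rooted quadrangulations with $n$ faces. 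So uniform inputs map to a uniform pointed rooted quadrangulation, and $v_*$ is conditionally uniform among the $n+2$ vertices.
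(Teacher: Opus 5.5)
The paper gives no proof of this statement. It imports it from Le Gall--Miermont; it is the pointed Cori--Vauquelin--Schaeffer bijection, and your four steps are the standard proof behind that citation. Step~1 is fine. The distance formula in Step~3 is complete: the successor chain gives the upper bound, and the lower bound holds because every arc changes the shifted label by exactly one and $v_*$ has shifted label $0$.

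The two steps that carry the actual combinatorics, however, are asserted rather than proved. In Step~2, Euler's formula (after noting that $Q$ is connected because successor chains end at $v_*$) only gives $n$ faces of total degree $4n$. It shows neither that every face has degree four nor that each face contains exactly one tree edge. The missing input is the local successor identities along a contour step from corner $c$ to corner $c+1$: $s(c)=c+1$, $s(c)=s(c+1)$, or $s(c)=s(s(c+1))$, according as $L(c+1)-L(c)$ is $-1$, $0$ or $+1$. Applied to both sides of a tree edge, they show that the face of the tree-plus-arcs map along each side is a digon, a triangle or a quadrilateral, respectively. An Euler count on that map ($3n$ edges, $n+2$ vertices, hence $2n$ faces for $2n$ tree sides) shows there are no other faces. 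Erasing the tree edge then produces a confluent face (increment $0$) or a simple face (increment $\pm1$), with the tree edge exactly where Schaeffer's rule would redraw it.

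In Step~4, showing that Schaeffer's edges form a spanning tree of $V(Q)\setminus\{v_*\}$ does not yield a bijection; you must also show that the two constructions are mutually inverse. The substantive half is that the successor arcs of the recovered labelled tree are exactly the edges of $Q$. The other half, injectivity, follows from the local description above. Alternatively, combine that injectivity with Tutte's count $q_n=\frac{2\cdot3^n}{(n+1)(n+2)}\binom{2n}{n}$. Your identity $2\cdot3^n\mathrm{Cat}_n=(n+2)q_n$ is exactly this formula. The paper derives it \emph{from} the present theorem in Lemma~\ref{ms:lem:v8-counting-normalization}, so it can enter only as an independently cited fact. Once bijectivity is established, uniformity needs no count. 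A bijection carries the uniform input law to the uniform law on rooted pointed quadrangulations. Conditional uniformity of $v_*$ only uses that a rooted map has no nontrivial root-preserving automorphism, so each rooted $Q$ has exactly $n+2$ pointed versions. Finally, the inverse map must specify the tree root. It is the endpoint of the root edge farther from $v_*$, and $\epsilon$ records the orientation of the root edge relative to it; the phrase ``the root corner's vertex'' has no meaning on the quadrangulation side until this is said.
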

Fix this coding. If $v_j$ is the tree vertex visited at contour time $j$, $0\le j\le2n$, let $C_n(j)$ be its height and $L_n(j)$ its label, and let $d_n(i,j)=d_Q(v_i,v_j)$. Interpolate these functions linearly in one variable and bilinearly for $d_n$.

The following subsequential convergence and limit identification are given in \cite[Section 2.1 and Theorem 3]{miermont}.
\begin{ExternalTheorem}[Subsequential encoding and metric limits]\label{ext:encoding-metric-convergence}
With $c_n=(9/(8n))^{1/4}$, from every subsequence of the random triples
\[
\left(\frac{C_n(2n\,\cdot)}{\sqrt{2n}},\ c_nL_n(2n\,\cdot),\ c_n d_n(2n\,\cdot,2n\,\cdot)\right)
\]
one can extract a further subsequence converging jointly in distribution, for the uniform norms on $[0,1]$ and $[0,1]^2$, to $(e,Z,D)$, where $(e,Z)$ has the law specified above. For every such subsequential limit, $D=D^*$ almost surely, with $D^*$ constructed from this same pair $(e,Z)$ by the formula above.
\end{ExternalTheorem}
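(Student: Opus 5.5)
This statement is quoted from \cite{miermont}, but I would reconstruct it in three stages: tightness of the triple, the upper bound $D\le D^*$, and the lower bound $D\ge D^*$.

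\textbf{Tightness of the encoding functions.} For $(C_n,L_n)$ I would invoke the classical joint convergence of the rescaled contour and label processes of a uniform labelled tree. First, Aldous's theorem gives $C_n(2n\cdot)/\sqrt{2n}\to e$. Second, conditionally on the tree, the labels are a branching random walk with increment variance $2/3$. A fourth-moment bound of the form
\[
\E\bigl[|L_n(i)-L_n(j)|^4\bigr]\le C\,d_{T_n}(v_i,v_j)^2,
\]
combined with the H\"older regularity of the contour, gives tightness of $c_nL_n$. Convergence of the finite-dimensional laws to the Gaussian field $Z$ with covariance $\min e$ then follows by a Gaussian central limit computation along the genealogy; the factor $(9/8)^{1/4}$ in $c_n$ absorbs the variance $2/3$ and the time change $2n$.

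For the distances I would use the standard cactus bound from Theorem~\ref{ext:pointed-tree-coding}. Following successor chains from corners $i$ and $j$ until they merge gives
\[
d_n(i,j)\le L_n(i)+L_n(j)-2\max\Bigl\{\min_{[i,j]}L_n,\ \min_{[0,i]\cup[j,2n]}L_n\Bigr\}+2 .
\]
The right-hand side is $D_n^\circ(i,j)+2$, and $c_nD^\circ_n$ is equicontinuous because $c_nL_n$ is tight. Hence the bilinearly interpolated $c_nd_n$ is tight in $C([0,1]^2)$, and Prokhorov's theorem yields subsequential limits $(e,Z,D)$.

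\textbf{Upper bound $D\le D^*$.} Passing to the limit in the cactus bound gives $D\le D^\circ$. If $d_e(s,t)=0$, then $s$ and $t$ are approximated by corners of the same tree vertex, so $D(s,t)=0$. The function $D$ is a pseudometric, being a limit of pseudometrics. Applying the triangle inequality along any chain in the definition of $D^*$ then gives $D\le D^*$. I would also record the root identity $D(s_*,t)=Z_t-\min Z$, where $s_*$ is the argmin of $Z$; it comes from the exact formula for $d_Q(v,v_*)$ in Theorem~\ref{ext:pointed-tree-coding}.

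\textbf{Lower bound $D\ge D^*$ (the main obstacle).} Here I would follow Le Gall's strategy with Miermont's refinement.

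First, I would establish that every geodesic from $s_*$ in the limit is a ``simple geodesic'', whose points are obtained from successor chains. It follows that $D(s,t)=D^*(s,t)$ whenever $s$ or $t$ equals $s_*$. By rerooting invariance, the same holds for $s$ replaced by an independent uniform time $U$.

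Second, to reach arbitrary pairs I would use Miermont's argument. Choose finitely many independent uniform points that are $\delta$-dense. Show that any $D$-geodesic between two points, except for short end segments, passes within small distance of geodesics to these typical points. Control the resulting errors through the absence of ``geodesic networks'' of large multiplicity, which Miermont obtains from an enumeration of quadrangulations with several sources and delays.

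Third, letting $\delta\to0$ and using the continuity of $D$ and $D^*$, I would conclude $D=D^*$ almost surely.

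I expect the multi-source estimate in the second step to be the hard core of the argument. My first attempt would be to use the bijection of \cite{miermont2009}, whose zero-delay version is recalled in Section~\ref{sec:faces}, to bound the probability that two typical geodesics create ``$\varepsilon$-thin'' regions.
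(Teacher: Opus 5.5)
The paper does not prove this statement; it imports it from Miermont (Section~2.1 and Theorem~3). The question is therefore whether your reconstruction stands on its own.

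Your tightness step and upper-bound step follow Le Gall's standard route and are fine in outline. One point needs more: the implication ``$d_e(s,t)=0\Rightarrow D(s,t)=0$'' requires Le Gall's actual argument, because corners at discrete times near $2ns$ and $2nt$ need not belong to one vertex. The root identity, by contrast, needs no geodesics at all. Identified times carry equal labels and $D^\circ(a,b)\ge|Z_a-Z_b|$, so $D^*(s,t)\ge|Z_s-Z_t|$. Also $D^*(s_*,t)\le D^\circ(s_*,t)=Z_t-\min Z=D(s_*,t)$.

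The genuine gap is the sentence ``By rerooting invariance, the same holds for $s$ replaced by an independent uniform time $U$.'' That sentence already contains the whole theorem:
\begin{itemize}
\item Suppose almost surely $D(U,\cdot)=D^*(U,\cdot)$, and apply this to an i.i.d.\ uniform sequence $U_1,U_2,\ldots$.
\item $D$ is continuous, and $|D^*(s,t)-D^*(s',t')|\le D^\circ(s,s')+D^\circ(t,t')$ makes $D^*$ continuous.
\item Density then gives $D=D^*$ on $[0,1]^2$, so your second and third steps would be superfluous.
\end{itemize}

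No available invariance justifies this step. The discrete model only makes $x_*$ and typical points exchangeable for the metric $D$, which gives $D(U,V)\overset{d}{=}D(s_*,V)$. Rerooting the labelled tree at $U$ (Le Gall--Weill) moves the tree root, time $0$, not the label minimum $s_*$. It therefore only gives $D^*(U,V)\overset{d}{=}D^*(0,V)$.

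What you actually need is $D^*(U,V)\overset{d}{=}D^*(s_*,V)$, an invariance of $D^*$ under exchanging $s_*$ with a typical point. Given $D\le D^*$ and $D(s_*,\cdot)=D^*(s_*,\cdot)$, this invariance is equivalent to $D(U,V)=D^*(U,V)$ almost surely, that is, to the theorem itself. Invoking it is circular. Put differently: after exchanging $x_*$ and $x_U$, ``geodesics are simple geodesics'' refers to the cut-locus tree and distance labels of $x_U$, not to $(e,Z)$. It says nothing about the fixed functional $D^*$ of $(e,Z)$.

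In Miermont's proof, the geodesic-star estimates from the multi-pointed bijection are the tool that proves $D\ge D^*$ in the first place. Your plan spends this machinery on passing from typical to arbitrary pairs, which is only continuity, and leaves the hard step unproved. In addition, your ``absence of geodesic networks of large multiplicity'' is too vague, as stated, to be checked.
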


The last theorem identifies the limiting distance as a measurable function of the same limiting contour and label processes. This is stronger than an unmarked convergence statement and is what makes the resampling argument below work. Two encodings with asymptotically identical contours and labels must have identical distance fields in every joint subsequential limit.

\subsection{Resampling private paths}
A label parity is the sum modulo two of the increments along a root path. We can nearly randomize several such parities independently by replacing increments on parts of these paths used by only one sampled vertex. The number of replaced edges tends to infinity, but remains negligible on both the tree-height and label scales.
\begin{Lemma}[Resampling preserves limiting geometry]\label{ms:lem:parity-geometry}
Fix $k\ge1$ and use the pointed tree encoding with $n$ edges. Sample $k$ independent uniform tree vertices $v_1,\ldots,v_k$. Let $\mathcal X_n$ be the encoded quadrangulation with distances multiplied by $c_n=(9/(8n))^{1/4}$, marked at these vertices, and equipped with the uniform probability measure on its $n+1$ tree vertices. Put $m_n=\lceil\log n\rceil$.

With probability tending to one, one can select, using only the tree and the sampled vertices, $m_n$ edges on each root-to-$v_i$ path which belong to none of the other sampled root paths. Replace all selected increments by fresh uniform draws from $\{-1,0,1\}$, independent of one another and of the entire original encoding and sampled vertices. Retain the tree, marks and root-orientation bit; if such a selection is impossible, change nothing. The resulting marked space $\widetilde{\mathcal X}_n$ has the same marginal law as $\mathcal X_n$. Their marked Gromov--Hausdorff--Prokhorov distance tends to zero in probability. Both marginal laws converge to the unit-area Brownian map with $k$ conditionally independent volume-sampled marks.
\end{Lemma}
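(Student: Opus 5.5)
We prove the four assertions in turn: that the selection succeeds with high probability, that resampling preserves the law, that the two spaces are close, and that both converge to the marked Brownian map.

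\textbf{Selection.} Let $w_{i}$ be the most recent common ancestor of $v_i$ with the union of the other sampled root paths. In the rescaled contour limit of Theorem~\ref{ext:encoding-metric-convergence}, $k$ uniform vertices correspond to independent uniform times, and the branch points are a.s.\ at positive distance below each leaf. Hence the length of the private segment from $w_i$ to $v_i$, divided by $\sqrt{2n}$, converges in law to an a.s.\ positive variable. In particular, this length exceeds $m_n=\lceil\log n\rceil$ with probability tending to one. We take the selection to be the last $m_n$ edges above $v_i$; these lie on the private segment by construction. This rule is a measurable function of the tree and the marks only.

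\textbf{Equality of laws.} Conditionally on the tree, the sampled vertices and the root bit, the original increments are i.i.d.\ uniform on $\{-1,0,1\}$ and independent of these data. The selected edge set is determined by this conditioning. Replacing the increments on a conditioning-measurable set by fresh i.i.d.\ uniform draws therefore leaves the conditional law of the full increment vector unchanged. Since the space, marks and measure are deterministic functions of the tree, increments, bit and marks, the encoded marked spaces $\mathcal X_n$ and $\widetilde{\mathcal X}_n$ have the same law.

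\textbf{Closeness.} Resampling $m_n$ increments on a root path changes the label of every vertex by at most $2km_n$, so $\sup_j |L_n(j)-\widetilde L_n(j)|\le 2km_n$. After scaling by $c_n\asymp n^{-1/4}$ this is $O(n^{-1/4}\log n)\to0$; the contour is unchanged. Now consider the joint sequence $(C_n,L_n,d_n,\widetilde L_n,\widetilde d_n)$ together with the contour times of the marks. By Theorem~\ref{ext:encoding-metric-convergence} applied to each coordinate, it is tight. Along any subsequential limit we get $(e,Z,D,Z,\widetilde D)$, because the label processes have the same limit, and the theorem gives $D=D^*(e,Z)=\widetilde D$ almost surely. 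Both marked spaces are then quotients of $[0,1]$ with pushforward of the uniform measure on contour times, marked at the same times. The $2n$ contour times versus $n+1$ vertices discrepancy contributes a Prokhorov error that vanishes, as is standard. The identity correspondence on $[0,1]$ therefore has distortion at most $\|c_nd_n-c_n\widetilde d_n\|_\infty\to0$ in probability. This bounds the marked GHP distance, which tends to zero in probability.

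\textbf{Convergence and main obstacle.} Convergence of $\mathcal X_n$ to the Brownian map marked by $k$ independent $\mu$-samples follows from the same subsequential limit, since $D^*$ is identified and uniform contour times push forward to $\mu$. Because the limit does not depend on the subsequence, the full sequence converges, and $\widetilde{\mathcal X}_n$ shares its law. I expect the main obstacle to be the selection step: showing rigorously that the private segments have length of order $\sqrt n$, which is much larger than $\log n$. This needs the joint convergence of the contour together with the mark times, plus the a.s.\ separation of branch points from the leaves in the Brownian CRT.
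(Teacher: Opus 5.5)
Your proposal is correct and follows the paper's proof step for step: the same selection of the last $m_n$ edges of each private terminal branch, the same conditional-independence argument for equality of laws, and the same use of the identification $D=D^*(e,Z)$ in every joint subsequential limit to turn the $O(c_nm_n)$ label perturbation into a uniformly small distance perturbation. The paper makes explicit a few things you only assert. It derives the independent uniform limiting mark times from the first-visit formula $2j-\operatorname{depth}(v^{(j)})$ and proves a.s.\ positivity of the terminal lengths. It also pairs the extra vertices $v_*,\widetilde v_*$ (distance discrepancy at most $4kc_nm_n$), which your contour-time correspondence omits. Finally, it couples the two uniform vertex measures exactly through the identity on tree vertices, so your Prokhorov detour through contour times is unnecessary.
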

\begin{proof}
Write $Y_n$ for the rescaled contour--label--distance triple in Theorem~\ref{ext:encoding-metric-convergence}. We first deduce that $Y_n$ converges in distribution along the full sequence to $(e,Z,D^*)$. The theorem gives a convergent further subsequence from every subsequence, and identifies its limit as $(e,Z,D^*)$ with $D^*$ determined by the same pair $(e,Z)$. All these limits therefore have the same joint law. To verify full-sequence convergence, let $F$ be any bounded continuous function on $C([0,1])^2\times C([0,1]^2)$, equipped with the uniform norms. If $\E F(Y_n)$ did not converge to $\E F(e,Z,D^*)$, some subsequence would remain at a fixed positive distance from this value. A convergent further subsequence supplied by the theorem would force these expectations to converge to that value, a contradiction. This proves the required joint functional convergence.

List the tree vertices in preorder as $v^{(0)},\ldots,v^{(n)}$. The first contour visit of $v^{(j)}$ occurs at time $2j-\operatorname{depth}(v^{(j)})$. The functional convergence just proved implies that the maximum depth divided by $\sqrt n$ is bounded in probability: for every $\varepsilon>0$ there is a finite $M$ such that the probability that this ratio exceeds $M$ is at most $\varepsilon$ for all sufficiently large $n$. Thus these first-visit times, divided by $2n$, differ uniformly from $j/n$ by a quantity tending to zero in probability. Independent uniform vertex indices consequently give contour times converging jointly to independent uniform times $U_1,\ldots,U_k$ on $[0,1]$, independent of the limiting excursion.

For the reduced tree joining the root and the sampled vertices, the terminal length leading to $U_i$ is
\[
e(U_i)-\max\left(\{0\}\cup
 \left\{\min_{u\in[U_i\wedge U_j,U_i\vee U_j]}e(u):j\ne i\right\}\right).
\]
This is strictly positive almost surely. Indeed, the uniform times are distinct and lie in $(0,1)$ almost surely; near any independent uniform time the excursion has values strictly below its value on both sides. For each deterministic $t\in(0,1)$, the Markov property in \cite[Proposition~2.9, p.~165]{legallmiermont2012} and the absolute-continuity formula immediately following it transfer Brownian oscillation just after $t$ to the normalized excursion: one first works up to any fixed time $b\in(t,1)$. The time-reversal invariance stated on the same page gives the corresponding oscillation just before $t$. Fubini's theorem then gives both properties at an independent uniform time. Each of the finitely many interval minima is therefore strictly smaller than $e(U_i)$. Uniform contour convergence now shows that the discrete terminal lengths, divided by $\sqrt{2n}$, converge to these positive lengths. Since $m_n/\sqrt n\to0$, the required selection succeeds with probability tending to one. For definiteness select the last $m_n$ edges of each private terminal path.

Selection depends only on the tree and sampled vertices. The label increments on the selected edges are independent of that information and of the other increments. Replacing them by independent increments of the same distribution therefore preserves the joint encoding law, and Theorem~\ref{ext:pointed-tree-coding} identifies both encoded maps with the required pointed quadrangulation law. The maximum change in a vertex label is at most $2km_n$, so the rescaled old and new label functions differ uniformly by $o(1)$.

Let $D_n,\widetilde D_n$ be the old and new rescaled distance fields on contour times. Their contour--label--distance triples are jointly tight because each has the full-sequence marginal convergence proved above. In any joint subsequential limit their contour and label functions coincide. Theorem~\ref{ext:encoding-metric-convergence} identifies each limiting distance field as $D^*$ of this same pair $(e,Z)$, hence the two limiting fields coincide. It follows that
\[
\|D_n-\widetilde D_n\|_\infty\longrightarrow0
\quad\text{in probability}.
\]
This uses identification of all subsequential limits, not deterministic continuity of the discrete encoding map.

Correspond each old tree vertex to the same vertex in the new encoding. The distortion on these vertices is at most the displayed uniform difference, the marked vertices are paired exactly, and this correspondence couples their uniform probability measures exactly. Pair also the extra distinguished vertices of the two encodings. By Theorem~\ref{ext:pointed-tree-coding}, their distances to a tree vertex $v$ are given by the corresponding expression $L(v)-\min L+1$, so the rescaled distance discrepancy is at most $4kc_nm_n\to0$. The resulting correspondence proves the claimed marked Gromov--Hausdorff--Prokhorov closeness.

Finally, the empirical first-visit times converge to Lebesgue measure. Combining this with the uniform convergence of the distance fields identifies the limiting vertex measure as Brownian-map volume, and the independent sampled times give independent marks from that measure. The extra distinguished vertex is at rescaled distance $c_n$ from a tree vertex of minimum label, so it creates no additional point in the limiting metric space. This proves the asserted marginal convergence.
\end{proof}
\begin{Lemma}[Absolute label parities and marked geometry]\label{ms:lem:parity-mixing}
Fix an integer $k\ge1$, independently of $n$. In the pointed tree encoding with $n$ edges, sample $k$ independent uniform tree vertices $v_i$ and set $P_i=L(v_i)\pmod2$. Equip the encoded quadrangulation with distances scaled by $(9/(8n))^{1/4}$, its uniform tree-vertex probability measure and these $k$ marks. As $n\to\infty$, the pair consisting of this marked space and $(P_1,\ldots,P_k)$ converges jointly in distribution, for the product of the marked Gromov--Hausdorff--Prokhorov topology and the discrete topology on $\{0,1\}^k$, to the marked Brownian map together with $k$ independent fair bits independent of that space. The limiting $k$ marks are conditionally independent samples from its volume measure.
\end{Lemma}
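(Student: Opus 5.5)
The plan is to compare the original encoding with the resampled one from Lemma~\ref{ms:lem:parity-geometry}. Write $\mathcal X_n$ for the original marked space and $P=(P_1,\ldots,P_k)$ for the original parities. Let $\widetilde{\mathcal X}_n$ and $\widetilde P$ be the resampled marked space and its parities, computed from the new labels. Since resampling preserves the joint law of the whole encoding together with the sampled vertices, $(\widetilde{\mathcal X}_n,\widetilde P)$ has the same law as $(\mathcal X_n,P)$. It therefore suffices to prove convergence for the resampled pair. The marked geometry is handled by Lemma~\ref{ms:lem:parity-geometry}: $\widetilde{\mathcal X}_n$ is within $o_P(1)$ of $\mathcal X_n$, and $\mathcal X_n$ converges to the marked Brownian map with conditionally independent volume marks. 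It remains to show that $\widetilde P$ is asymptotically a vector of fair bits independent of $\mathcal X_n$.

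The key computation goes as follows. Let $\mathcal F_n$ be the $\sigma$-field generated by the original encoding, the sampled vertices and $\epsilon_n$; the space $\mathcal X_n$ is $\mathcal F_n$-measurable. On the selection event $E_n$, each $\widetilde L(v_i)$ equals an $\mathcal F_n$-measurable quantity plus the sum of the $m_n$ fresh increments on the private path of $v_i$. These paths are disjoint, so the $k$ sums are independent and independent of $\mathcal F_n$. The parity of a sum of $m$ i.i.d.\ uniform variables on $\{-1,0,1\}$ has bias $\E[(-1)^{\xi}]^m=(1/3)^m$. Hence, conditionally on $\mathcal F_n$ and on $E_n$, the total variation distance between the law of $\widetilde P$ and the uniform law on $\{0,1\}^k$ is at most $k\,3^{-m_n}\to0$, uniformly in everything else.

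To conclude, take a bounded continuous $\Phi$ on the marked GHP space and $b\in\{0,1\}^k$. Then
\[
\E\bigl[\Phi(\mathcal X_n)\ind_{\{\widetilde P=b\}}\bigr]=2^{-k}\E\Phi(\mathcal X_n)+O\bigl(k3^{-m_n}+\Pp(E_n^c)\bigr).
\]
Replace $\mathcal X_n$ by $\widetilde{\mathcal X}_n$ using the GHP closeness from Lemma~\ref{ms:lem:parity-geometry}. Taking $\Phi$ bounded Lipschitz suffices, and this step costs $o(1)$. Since $\Pp(E_n^c)\to0$, the limit is $2^{-k}\E\Phi(\text{marked Brownian map})$. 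As $\{0,1\}^k$ is discrete, these test functions determine joint convergence in the product topology. Transferring back through the equality of joint laws gives the claim for $(\mathcal X_n,P)$.

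The main obstacle is purely one of bookkeeping. One has to make sure that the measurability of the selection (it depends only on the tree and the sampled vertices) really makes the fresh increments independent of $\mathcal F_n$. One also has to confirm that the parity $P_i$ defined by absolute labels is affected only through the private-path increments. Both facts are built into the construction of Lemma~\ref{ms:lem:parity-geometry}.
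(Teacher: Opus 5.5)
Your proposal is correct and is essentially the paper's argument with the roles of old and new increments interchanged. The paper conditions on the tree, marks, root bit, unselected old increments and all fresh increments, so that $\widetilde{\mathcal X}_n$ is measurable and the \emph{original} parities $P$ are randomized; it then swaps $\widetilde{\mathcal X}_n$ for $\mathcal X_n$ directly and needs no final transfer through the equality in law of $(\widetilde{\mathcal X}_n,\widetilde P)$ and $(\mathcal X_n,P)$ (also, a trivial slip: $\E(-1)^\xi=-1/3$, so the bias is $(-1/3)^{m}$, whose absolute value $3^{-m}$ is all you use).
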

\begin{proof}
Use the resampling coupling of Lemma~\ref{ms:lem:parity-geometry}, and write $E_n$ for its successful selection event. Condition on the tree, sampled vertices, root-orientation bit, all unselected old increments and all new increments. Call this information $\mathcal F_n$. On $E_n$, each old parity $P_i$ is the parity of a fixed integer plus a sum of $m_n$ independent old increments. These $k$ sums are independent, since the selected paths are private. If $\xi$ is uniform on $\{-1,0,1\}$, then $\E(-1)^\xi=-1/3$. Thus the total variation distance of each such parity from a fair bit is $3^{-m_n}/2$, and the conditional joint distance from the uniform law on $\{0,1\}^k$ is at most $k3^{-m_n}/2$.

The resampled marked space $\widetilde{\mathcal X}_n$ is $\mathcal F_n$-measurable. For every bounded measurable function $f$ of that space and bounded function $g$ on $\{0,1\}^k$, the preceding estimate, with the trivial bound on $E_n^c$, gives
\[
\left|\E[f(\widetilde{\mathcal X}_n)g(P)]
 -\E[f(\widetilde{\mathcal X}_n)]\,2^{-k}\sum_{p\in\{0,1\}^k}g(p)\right|
\le 2\|f\|_\infty\|g\|_\infty
 \left(\Pp(E_n^c)+\frac{k3^{-m_n}}2\right).
\]
The right side tends to zero. For bounded Lipschitz $f$, Lemma~\ref{ms:lem:parity-geometry} permits replacing the resampled space by the original space in this formula: their distance tends to zero in probability, and boundedness gives convergence of the difference in expectation. The marginal convergence from that lemma and these test-function identities prove the claimed joint convergence and independence.
\end{proof}

\subsection{Removing the same-color condition}
We now return from tree labels to uniform vertices of a quadrangulation. The relative colors are intrinsic: reversing the names of the two color classes leaves their differences unchanged.
\begin{Proposition}[The marked limit under same-color conditioning]\label{ms:lem:parity-decoupling}
Fix an integer $k\ge1$. In a uniform rooted planar quadrangulation $Q_n$ with $n$ faces, sample $k$ independent uniform vertices $x_1,\ldots,x_k$. Choose either bipartite coloring $\chi:V(Q_n)\to\{0,1\}$ and put
\[
B_i=\chi(x_i)-\chi(x_1)\pmod2,\qquad 2\le i\le k.
\]
The vector $(B_2,\ldots,B_k)$ is independent of the choice of coloring. Jointly with the marked metric-measure space obtained by multiplying graph distances by $(9/(8n))^{1/4}$ and assigning uniform probability measure to the vertices, it converges in distribution as $n\to\infty$ to $k-1$ independent fair bits independent of the marked Brownian map, whose $k$ marks are conditionally independent samples from its volume measure. Here joint convergence uses the marked Gromov--Hausdorff--Prokhorov topology for compact metric-measure spaces with $k$ ordered marks and the discrete topology on $\{0,1\}^{k-1}$ for the bit vector. Consequently, the probability that all sampled vertices have the same color tends to $2^{1-k}$, and conditioning on that event preserves this marked Brownian-map limit and its Voronoi-volume law.
\end{Proposition}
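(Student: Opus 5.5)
We will transfer Lemma~\ref{ms:lem:parity-mixing} from the pointed tree encoding to uniform vertices of $Q_n$.

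\emph{Coloring independence.} Any two bipartite colorings of the connected bipartite graph $Q_n$ either agree or differ by $\chi\mapsto1-\chi$. The differences $\chi(x_i)-\chi(x_1)\pmod 2$ are unchanged by this swap, so $(B_2,\ldots,B_k)$ does not depend on the choice. Since $Q_n$ is bipartite and $d_Q(v,v_*)=L(v)-\min L+1$ by Theorem~\ref{ext:pointed-tree-coding}, the coloring $\chi(v)=d_Q(v,v_*)\pmod 2$ is legitimate. For tree vertices this gives $\chi(v)\equiv L(v)+c\pmod 2$, where $c$ does not depend on $v$. Hence $B_i\equiv P_i-P_1\pmod 2$, with $P_i$ as in Lemma~\ref{ms:lem:parity-mixing}.

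\emph{Passing from tree vertices to all vertices.} By Theorem~\ref{ext:pointed-tree-coding}, $Q$ is a uniform rooted quadrangulation, and $v_*$ is conditionally uniform among its $n+2$ vertices. Conditionally on $Q$ and $v_*$, independent uniform tree vertices are independent uniform vertices of $Q$ among those different from $v_*$. Let $x_1,\ldots,x_k$ be independent uniform vertices of $Q$. On the event that none equals $v_*$, they are distributed as the tree samples conditioned the same way. That event has probability at least $1-k/(n+2)\to1$. So in total variation, the pair (marked space with uniform measure on $V(Q)$, $(B_i)$) is within $O(k/n)$ of the corresponding pair built from the tree samples $v_i$.

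Two further discrepancies remain. First, the measure uniform on all $n+2$ vertices differs from the measure uniform on the $n+1$ tree vertices by total variation $O(1/n)$; coupling them along the identity correspondence gives marked GHP distance $O(1/n)$. Second, the extra point $v_*$ is at rescaled distance $c_n$ from a tree vertex, so including it changes Hausdorff distance by at most $c_n\to0$. Lemma~\ref{ms:lem:parity-mixing} gives joint convergence of the tree version to the marked Brownian map with independent fair bits $(P_i)$. The map $(P_1,\ldots,P_k)\mapsto(P_i-P_1)_{i\ge2}$ sends the uniform law on $\{0,1\}^k$ to the uniform law on $\{0,1\}^{k-1}$. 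This proves the joint convergence statement.

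\emph{Conditioning.} The same-color event is $\{B_2=\cdots=B_k=0\}$, which is open and closed in the discrete topology. Joint convergence therefore gives probability $\to2^{1-k}>0$. For bounded continuous $f$, we have $\E[f\,\ind_{\{B=0\}}]\to 2^{1-k}\E f(\text{limit})$ by independence, and dividing by the probability of the event gives the conditional convergence. The Voronoi-volume law is preserved because Lemmas~\ref{ms:lem:bisectors} and \ref{ms:lem:cell-mass-continuity} make the cell-mass vector an a.s.\ continuous functional at the limit, so the continuous mapping theorem applies.

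The main obstacle is the bookkeeping in the second step: ensuring the uniform-vertex sampling on $Q_n$ (including $v_*$) matches the tree-vertex sampling jointly with the parities. This is handled by the conditional uniformity of $v_*$ together with $O(1/n)$ total-variation and GHP perturbation bounds.
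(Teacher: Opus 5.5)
Your proposal is correct and follows essentially the same route as the paper: relative colors become label-parity differences via the distance formula, Lemma~\ref{ms:lem:parity-mixing} supplies the joint limit, the $k/(n+2)$ coupling and the $1/(n+2)$ total-variation bound transfer it to uniform vertices of $Q_n$, and conditioning on the clopen event together with Lemmas~\ref{ms:lem:bisectors} and~\ref{ms:lem:cell-mass-continuity} gives the rest. The differences are cosmetic: you transfer to $Q_n$ before conditioning rather than after, you spell out the coloring invariance, and your remark about adjoining $v_*$ is harmless but unnecessary, since the encoded space in Lemma~\ref{ms:lem:parity-geometry} already contains it.
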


\begin{proof}
First sample $k$ independent uniform tree vertices in the pointed encoding and write $P_i=L(v_i)\pmod2$. By Lemma~\ref{ms:lem:parity-mixing}, these bits converge jointly with the marked space to independent fair bits independent of that space. The map
\[
(p_1,\ldots,p_k)\longmapsto(p_2-p_1,\ldots,p_k-p_1)\pmod2
\]
sends the uniform law on $\{0,1\}^k$ to the uniform law on $\{0,1\}^{k-1}$, since every relative vector has two preimages. Label differences modulo two are bipartite color differences, by the distance formula in Theorem~\ref{ext:pointed-tree-coding}.

Let $\mathcal X_n$ denote this tree-sampled marked space and $\mathcal X$ its marked Brownian-map limit. If $A_n$ is the event that all relative bits vanish, the discreteness of their state space gives, for every bounded continuous $f$,
\[
 \E[f(\mathcal X_n)\ind_{A_n}]
 \longrightarrow 2^{1-k}\E[f(\mathcal X)],
 \qquad \Pp(A_n)\longrightarrow2^{1-k}>0.
\]
Thus division proves the conditional marked convergence. For $k=1$, the relative bit vector is empty and the event is the whole sample space, so the assertion also holds.

To pass to uniform quadrangulation vertices, add an independent uniform distinguished vertex to a uniform rooted quadrangulation and use Theorem~\ref{ext:pointed-tree-coding}. Its tree vertices are precisely all vertices except the distinguished one. A uniform quadrangulation vertex can be coupled to a uniform tree vertex, with failure probability $1/(n+2)$; for $k$ independent samples the failure probability is at most $k/(n+2)$. The uniform tree and quadrangulation measures differ in total variation by $1/(n+2)$. These errors vanish, including after conditioning since the conditioning probability tends to $2^{1-k}$. This proves both marked-space assertions for quadrangulation samples.

Finally, Lemma~\ref{ms:lem:bisectors} gives null limiting bisectors. Lemma~\ref{ms:lem:cell-mass-continuity}, applied to the unconditional and conditional marked convergence, proves the corresponding convergence of strict Voronoi masses. Any assignment of discrete distance ties has the same limit, since their total mass tends to zero.
\end{proof}

\section{From labelled face areas to Voronoi volumes}\label{sec:faces}
\subsection{The face series and its two interpretations}
The bridge between enumeration and geometry is a generating series with one variable per face. We state the analytic limit as an external theorem, then explain why the same discrete objects encode quadrangulations with ordered same-color sources.

A rooted map has one distinguished oriented edge. A quadrangulation has face degree four, counted with multiplicity, at every face. Its graph distance is $d_Q$ and its vertices carry uniform probability measure. Every planar quadrangulation is bipartite. For $n$ faces it has $n+2$ vertices and $2n$ edges. We use the distance scale $c_n=(9/(8n))^{1/4}$, consistently with Section~\ref{sec:parity}.

We use the face series $\mathcal Z_k$ defined in \eqref{eq:face-series-definition}, with minimum boundary label one on every numbered face and inverse-automorphism weights. An edge whose two sides have face labels $i,j$ contributes $\sqrt{g_i g_j}$; an edge with face $i$ on both sides contributes $g_i$. These are precisely the weights $\prod_i g_i^{a_i}$, where $a_i$ is half the boundary degree and $\sum_i a_i$ is the number of edges. When a formal single-variable interpretation is useful, set $g_i=z w_i$ with fixed positive $w_i$ and expand in $z$. For positive numerical weights we retain the nonnegative-sum interpretation from Section~\ref{sec:introduction}.

Throughout this section the critical scaling is
\begin{equation}\label{eq:critical-scaling}
 g_i(\varepsilon)=\frac{1-\lambda_i\varepsilon^4/36}{12},\qquad
 \eta_i=\lambda_i/4,
\end{equation}
with positive $\lambda$ fixed and $\varepsilon\downarrow0$.

The analytic input is \cite[Theorem~1.1, p.~5]{sun2026critical}, with the series defined on its page~3 and the kernels and marked scheme integral on pages~4--5. Its definitions agree with those above: the same inverse-automorphism weights, half-face degrees, Friedrichs Green kernels with derivative jump $-1$, and all four face-mark cases are used, including the factor two when an edge has the same face on both sides and both orders of two marks.
\begin{ExternalTheorem}[Critical limit of the face-constrained series]\label{ext:face-constrained-critical-limit}
For every fixed $k\ge3$, the integral $\mathcal I_{\{1,\ldots,k\}}$ is finite and continuous on $(0,\infty)^k$, and
\[
 \varepsilon^{4k-10}\mathcal Z_k(g_1(\varepsilon),\ldots,g_k(\varepsilon))
 \longrightarrow 3^{3k-6}\mathcal I_{\{1,\ldots,k\}}(\lambda/4)
\]
locally uniformly for $\lambda$ in compact subsets of $(0,\infty)^k$.
\end{ExternalTheorem}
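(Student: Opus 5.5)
This statement is imported from Sun~\cite[Theorem~1.1]{sun2026critical}, so in the paper it is invoked, not reproved. If I were to prove it from scratch, I would follow the standard scheme decomposition of labelled maps with $k$ faces, together with a continuum limit of the discrete transfer quantities.

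\textbf{Decomposition.} Iteratively remove leaves from a map counted by $\mathcal Z_k$, so that only its $2$-core remains. Then contract the maximal chains of degree-two vertices. What is left is a scheme with $k$ faces. The generic schemes are the cubic schemes $G$ of the statement, with $2k-4$ vertices and $3k-6$ edges. The non-cubic schemes, having vertices of degree at least four, have fewer edges and will contribute a lower power of $\varepsilon^{-1}$.

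\textbf{Discrete building blocks.} The trees hanging off the core are labelled trees. A face-$i$ corner of such a tree carries weight $g_i$ per half-side, and its labels are constrained to stay at least $1$. Their generating function $R_\ell(g)$, indexed by the label $\ell$ of the attachment vertex, satisfies the usual quadratic recursion. With $\ell=x/\varepsilon$ and $g_i(\varepsilon)$ as in \eqref{eq:critical-scaling}, I expect
\[
R_\ell=R_c\bigl(1-\varepsilon^2u_{\eta_i}(x)+o(\varepsilon^2)\bigr),
\]
obtained from the explicit Bouttier--Di~Francesco--Guitter solution. The ODE $u''=u^2-\eta$ and the blow-up $6/x^2$ at the wall should come out of the recursion. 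A chain of the core separating faces $i$ and $j$ is a path whose vertices each carry tree weights from both sides. Its generating function is therefore a discrete transfer operator. Its resolvent, scaled by $\varepsilon^{-1}$ in time and label, converges to the Green kernel $K_{ij}$ of $-d^2/dx^2+u_i+u_j$. The Friedrichs realization is forced because the singular potential $12/x^2$ makes the regular solution behave like $x^4$.

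\textbf{Face marks.} The condition ``minimum boundary label of face $i$ equals $1$'' is handled by inclusion--exclusion: it is ``labels $\ge1$'' minus ``labels $\ge2$''. In the scaling limit this difference becomes a derivative in the label shift. Distributing that derivative over the edges of the boundary of face $i$ marks one incident edge $\alpha(i)$. On a chain, the derivative of the potential inserts $p_i=-u_i'$ at an intermediate point $z_i$ between two kernels. This produces the four edge factors: a factor $2$ when face $i$ lies on both sides, and both orders when two marks fall on a common edge.

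\textbf{Assembly and the main obstacle.} Counting powers of $\varepsilon$ gives the exponent $4k-10$. Each vertex integral and each chain kernel contributes a scaling factor, and the constant $3^{3k-6}$ collects the per-edge normalization of the transfer kernels. Root and automorphism factors yield $1/|\Aut(G)|$.

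The main obstacle is the uniform domination needed to exchange the limit with the sums and integrals, and to show that non-cubic schemes are negligible. Near $x=0$ the kernels behave like $x^4y^{-3}$. Near infinity, the decay of $u_i\to\sqrt{\eta_i}$ is essential. I would first try to prove discrete bounds on the transfer resolvent of the form $\min(x,y)^4\max(x,y)^{-3}e^{-c|x-y|}$, uniform in $\varepsilon$. Such bounds should give both the finiteness of $\mathcal I$ and dominated convergence, locally uniformly for $\lambda$ in compact sets.
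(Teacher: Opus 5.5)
You are right that the paper gives no proof of this statement: it imports it from Sun's Theorem~1.1 and only checks that the conventions agree (inverse-automorphism weights, half-face degrees, Friedrichs kernels with derivative jump $-1$, and all four mark cases, including the factor two and both orders of two marks). Your from-scratch outline is therefore not needed, but two points in it would fail as written.

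First, the tree normalization is off by a factor of three. At $g=1/12$ the exact solution is $R_\ell=2\ell(\ell+3)/((\ell+1)(\ell+2))=2(1-2\ell^{-2}+O(\ell^{-3}))$. Substituting $R_\ell=2(1-\varepsilon^2w(\varepsilon\ell))$ into $R_\ell=1+g_iR_\ell(R_{\ell-1}+R_\ell+R_{\ell+1})$, with $g_i$ from \eqref{eq:critical-scaling}, gives $w''=3w^2-\lambda_i/12$. Hence $R_\ell=2\bigl(1-\tfrac{\varepsilon^2}{3}u_{\eta_i}(\varepsilon\ell)+o(\varepsilon^2)\bigr)$ with $\eta_i=\lambda_i/4$. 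This is consistent with the $6/x^2$ blow-up you expect; your displayed expansion is not. With your version the chain transfer operator would approximate $1-\tfrac{\varepsilon^2}{3}\bigl(-\tfrac{d^2}{dx^2}+3(u_i+u_j)\bigr)$. Its $36/x^2$ singularity has the wrong indicial exponents, so $K_{ij}$ would not appear. With the corrected version, each chain contributes $\tfrac{3}{4\varepsilon}K_{ij}$ and each cubic vertex a corner factor $2^3$, which is exactly how $3^{3k-6}$ arises.

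Second, kernel bounds alone cannot give finiteness or domination. Since $K_{ij}(x,x)$ tends to a positive constant as $x\to\infty$, a bound of the form $\min(x,y)^4\max(x,y)^{-3}e^{-c|x-y|}$ gives no decay under a common translation of all labels. Integrability at infinity comes instead from the exponential decay of the marks $p_h=-u_h'$. Relatedly, the inclusion--exclusion ``labels $\ge1$ minus labels $\ge2$'' must be performed before summing over the scheme-vertex labels. Otherwise each term, having only lower bounds, is an infinite series, since every labelling can be shifted upward. What you need, uniformly in $\varepsilon$, is an exponentially decaying bound on the wall-shift difference of the chain weights---the discrete counterpart of $\int K_{ij}p_hK_{ij}$---together with control of the singularity $p_h(z)\sim12z^{-3}$ near the wall.
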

Only this finite, continuous critical limit is imported. Real analyticity and the diagonal formula will follow from the probabilistic identification below.

We use the zero-delay specialization of Miermont's correspondence \cite[Sections 2.2 and 2.5, Lemma 2 and Theorem 4, pp. 733--734 and 738--740]{miermont2009}. His labels are defined modulo a common additive constant; requiring every face minimum to be one fixes the representative for which every source has label zero.
\begin{ExternalTheorem}[Zero-delay correspondence]\label{ext:general-zero-delay}\label{ext:zero-delay-correspondence}
For each $k\ge3$, there is a bijection between the maps counted by $\mathcal Z_k$ with $n$ edges and unrooted planar quadrangulations with $n$ faces and $k$ ordered distinct vertices $x_1,\ldots,x_k$ in one bipartite color class, both taken up to orientation-preserving isomorphism. It matches source $x_i$ with numbered face $i$ and identifies the automorphism group preserving the ordered sources with the automorphism group preserving the face numbers and vertex labels. Thus corresponding objects have the same inverse-automorphism weight, with no multiplicity factor. The vertices other than the sources are the vertices of the labelled map, with
\[
 \ell(v)=\min_i d_Q(v,x_i),\qquad
 d_Q(v,x_i)=\ell(v)\quad\text{if $v$ is incident to face $i$}.
\]
Each labelled-map edge corresponds to one quadrangulation face.
\end{ExternalTheorem}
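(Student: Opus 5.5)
This statement is imported rather than proved. The plan is to derive it as the zero-delay specialization of the general multipointed bijection of \cite[Theorem~4]{miermont2009}, checking three points: the admissibility conditions on sources, the label normalization, and equivariance under isomorphisms.

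\emph{Admissibility of the sources.} Miermont's correspondence takes a quadrangulation $Q$ with $k$ ordered sources $x_1,\ldots,x_k$ and integer delays $d_1,\ldots,d_k$ subject to two conditions for $i\ne j$:
\[
|d_i-d_j|<d_Q(x_i,x_j),\qquad d_i-d_j+d_Q(x_i,x_j)\ \text{even}.
\]
With all delays equal to zero, these reduce to $d_Q(x_i,x_j)>0$ and $d_Q(x_i,x_j)$ even. Since every planar quadrangulation is bipartite, this says exactly that the sources are distinct and lie in one bipartite color class.

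\emph{Label normalization.} The labelling function becomes $\ell(v)=\min_i d_Q(v,x_i)$, defined a priori up to a common additive constant. Miermont's construction applies the Schaeffer rules to each face of $Q$, according to its label pattern $(l,l+1,l,l+1)$ or $(l,l+1,l+2,l+1)$. It produces a labelled map with $k$ faces, in which face $i$ contains $x_i$ and has minimum boundary label $d_i+1$ (Lemma~2 there). At zero delay every face minimum therefore equals one. This fixes the representative of the labels, and the requirement $\min_{\partial i}\ell=1$ is exactly the constraint in $\mathcal Z_k$. The same lemma gives $d_Q(v,x_i)=\ell(v)$ for $v$ incident to face $i$, and the vertex set of the labelled map is $V(Q)\setminus\{x_1,\ldots,x_k\}$. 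Each face of $Q$ produces exactly one edge, so $n$ faces correspond to $n$ edges. The condition $k\ge3$ only guarantees we are in the regime where Miermont's statement applies without degenerate small cases. His inverse construction, which reinserts one source in each face and draws successor arcs, shows bijectivity between the two decorated classes.

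\emph{Unrooted objects and automorphisms.} This step is the main obstacle. Miermont states the bijection for rooted or otherwise labelled objects, while the statement concerns unrooted objects weighted by $1/|\Aut|$. I would argue that both directions of the construction are defined locally and canonically from $(Q,x_1,\ldots,x_k)$, respectively from $(M,\ell,\text{face numbers})$, using only the embedding, distances and labels. Hence every orientation-preserving isomorphism of $Q$ fixing each source induces an isomorphism of the labelled map preserving labels and face numbers, and conversely via the inverse construction.

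The two induced homomorphisms between automorphism groups are mutually inverse, by functoriality of a construction that is bijective on isomorphism classes. This identifies the groups and gives equality of the inverse-automorphism weights, with no multiplicity factor. If one prefers to avoid functoriality, I would instead pass through rooted versions: root both objects at corresponding edges, using the bijection between $Q$-faces and $M$-edges, and count $2n$ rootings of $Q$ against $2n$ oriented edges of $M$, by orbit--stabilizer on each side.
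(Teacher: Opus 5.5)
Your proposal is correct and follows essentially the paper's route: check Miermont's two delay conditions at zero delay (they reduce to distinct sources in one color class), fix the additive label constant so that every face minimum is one, and identify the automorphism groups because both the forward construction and the inverse successor rule commute with orientation-preserving isomorphisms. Two small corrections: Miermont's bijection itself needs no restriction $k\ge3$, since that hypothesis is used elsewhere in the paper. Also, in your rooted fallback the standard count is $4n$ oriented roots of $Q$ against $2n$ oriented edges of $M$ times a sign, unless you define a root of $Q$ as a face together with an orientation of its $M$-edge.
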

The group identification uses the same constructions: the forward construction and the inverse successor rule depend only on the cyclic orders, labels and numbered faces or ordered sources, so both commute with orientation-preserving isomorphisms.
The delay conditions require $|\tau_i-\tau_j|<d_Q(x_i,x_j)$ for $i\ne j$ and $d_Q(x_i,x_j)+\tau_i-\tau_j$ to be even. With all delays $\tau_i=0$, distinct sources in the same bipartite class satisfy
\[
 |\tau_i-\tau_j|=0<d_Q(x_i,x_j),\qquad d_Q(x_i,x_j)\in2\mathbb Z,
\]
so both requirements hold for every fixed $k$.

We use the fixed-face pruning and chain-suppression decomposition of Miermont~\cite[Section~3.1]{miermont2009}; Bouttier and Guitter~\cite[Section~3.2]{bouttierguitter} describe the three-face case. Prune edges incident to vertices of degree one, then suppress every maximal degree-two chain. The resulting reduced scheme has minimum degree three and retains the ribbon structure and face numbers. The pruned plane trees are attached in corners, the sectors between consecutive incidences at a vertex. Their root labels are the labels of their attachment vertices. A chain is nonempty, even when its endpoint labels coincide; a loop retains both endpoint incidences. Euler's relation bounds the numbers of vertices and edges of a reduced scheme by $2k-4$ and $3k-6$, respectively, so there are finitely many schemes for fixed $k$.

\subsection{Fixed-size face areas}
Write $M_n$ for a labelled map with $n$ edges and $Q_n$ for its corresponding quadrangulation under Theorem~\ref{ext:general-zero-delay}. The following dictionary separates the two objects:
\begin{center}
\begin{tabular}{@{}ll@{}}
\toprule
Labelled map $M_n$ & Quadrangulation $Q_n$ \\
\midrule
$n$ edges & $n$ quadrangular faces \\
$k$ numbered faces & $k$ ordered sources $x_1,\ldots,x_k$ \\
Half-face degree $a_i=\deg_{M_n}(i)/2$ & Discrete area associated with $x_i$ \\
Label $\ell(v)$ & Nearest-source distance $\min_j d_{Q_n}(v,x_j)$ \\
\bottomrule
\end{tabular}
\end{center}
The varying face areas are the half-degrees in $M_n$; every face of $Q_n$ still has degree four. At fixed $n$, we sample $M_n$ with probability proportional to its inverse automorphism weight. The face fugacities will be inserted later as observables under this law.

There are two assertions to prove: a deterministic comparison of $a_i$ with strict Voronoi vertex counts in $Q_n$, up to a tie error, and convergence of those counts under the correct conditioned law. We establish the law first so that both the geometric convergence and the tie estimate apply to the object being counted.
\begin{Lemma}[From face areas to Voronoi volumes]\label{ms:lem:v8-face-area-limit}
Fix an integer $k\ge3$. For $n\ge k-1$, let $\mathfrak M_{n,k}$ be the set of isomorphism classes of maps counted by $\mathcal Z_k$ with $n$ edges, and sample $M_n\in\mathfrak M_{n,k}$ with law
\[
 \Pp(M_n=M)=\frac{|\Aut(M)|^{-1}}
 {\displaystyle\sum_{M'\in\mathfrak M_{n,k}}|\Aut(M')|^{-1}},
 \qquad M\in\mathfrak M_{n,k}.
\]
No face fugacity factors enter this fixed-size probability law. Writing $a_i=a_i(M_n)$ for its half-face degrees, as $n\to\infty$ we have
\[
(a_1/n,\ldots,a_k/n)\Longrightarrow(V_1^{(k)},\ldots,V_k^{(k)}).
\]
The associated quadrangulations have $k$ ordered distinct same-color sources. The number of vertices with more than one nearest source is $o(n)$ in probability, and changing from face areas to strict Voronoi vertex counts changes each normalized coordinate by $o(1)$.
\end{Lemma}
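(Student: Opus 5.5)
The plan is to prove the two assertions in the order the statement suggests: first identify the law of the associated marked quadrangulation, then compare face areas with strict Voronoi counts deterministically, with an error controlled by the number of tied vertices.

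\emph{Step 1: the law.} By Theorem~\ref{ext:general-zero-delay}, sampling $M_n$ with weight $|\Aut(M_n)|^{-1}$ is the same as sampling an unrooted quadrangulation with $n$ faces and $k$ ordered distinct same-color sources, with weight $|\Aut|^{-1}$. The automorphisms here preserve the ordered sources. I would then root the quadrangulation. Each unrooted marked object with weight $|\Aut|^{-1}$ has exactly $4n/|\Aut|$ distinct rootings, since the automorphism group acts freely on the $4n$ oriented edges. So the law above is, up to the constant $4n$, the law of a uniform rooted quadrangulation $Q_n$ together with $k$ ordered vertices. These vertices are uniform, independent, conditioned to be pairwise distinct and to have the same color. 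Distinctness fails with probability $O(k^2/n)$. The same-color event has probability tending to $2^{1-k}>0$ by Proposition~\ref{ms:lem:parity-decoupling}. Hence the additional distinctness conditioning changes the law by $o(1)$ in total variation. Proposition~\ref{ms:lem:parity-decoupling} then gives, under exactly this law, three facts:
\begin{itemize}
\item the marked Brownian-map limit;
\item convergence of the normalized strict Voronoi vertex counts $N_i/(n+2)$ to $(V_1^{(k)},\ldots,V_k^{(k)})$;
\item that the number $T_n$ of vertices with more than one nearest source is $o(n)$ in probability.
\end{itemize}

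\emph{Step 2: deterministic comparison.} It suffices to show that $|a_i-N_i|\le C(T_n+k)$ for a constant $C$. I would use the pruning and chain-suppression decomposition of $M_n$. Let $\Sigma$ be the skeleton: the submap left after iteratively pruning degree-one vertices. It is planar and connected, still has the $k$ numbered faces, and has minimum degree two.

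The key observation comes from the relation $d_Q(v,x_j)=\ell(v)$ for $v$ incident to face $j$. It shows that a vertex incident to two distinct faces is tied. In particular, every endpoint of an edge separating two different faces is a tie vertex.

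Now split $2a_i$ into two parts. The first part counts twice the edges having face $i$ on both sides. The second counts the edges separating $i$ from other faces.
\begin{itemize}
\item Edges with face $i$ on both sides and off $\Sigma$ lie in the pruned trees hanging in face $i$. Each tree contributes one edge per non-root vertex, and these vertices are incident only to face $i$. They are therefore strict-cell vertices of $x_i$, or at least not tied.
\item Skeleton edges with the same face on both sides would be bridges of $\Sigma$. I expect to handle them by the same vertex/edge count along chains.
\item For the cross edges, Euler's formula for the skeleton gives $|E(\Sigma)|=|V(\Sigma)|+k-2$. I would apply it to the subgraph of separating edges (or to $\Sigma$ itself) to bound the number of separating edges by (number of tie vertices)$+k$.
\end{itemize}
Conversely, every strict-cell vertex of $x_i$ other than $x_i$ itself is incident only to face $i$. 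Such a vertex is then a tree vertex or a degree-two chain vertex lying within face $i$, and it is counted once, up to the tie-vertex endpoints.

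Combining gives $|a_i-N_i|\le C(T_n+k)+1$, where the $+1$ accounts for the source. With Step 1, and since $T_n=o(n)$, this yields $a_i/n-N_i/n\to0$ in probability and hence $(a_i/n)_i\Rightarrow(V_i^{(k)})_i$.

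\emph{Main obstacle.} I expect the main obstacle to be the precise bookkeeping in Step 2, namely making sure that every edge is charged either to a non-tied vertex of the right face or to a tie vertex with bounded multiplicity. The delicate cases are skeleton chains with the same face on both sides, loops, and vertices of high degree on the skeleton.

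To avoid degree bounds, I would first try to charge each edge of $M_n$ to its endpoint farther from the skeleton, in the forest-plus-skeleton structure. Each non-skeleton vertex is then charged exactly once. The skeleton edges are handled by the Euler identity, which needs only the vertex count of $\Sigma$, and all vertices of $\Sigma$ incident to two faces are tied.

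It then remains to verify that skeleton vertices incident to only one face are $O(T_n+k)$. For this I would use that, after chain suppression, the scheme has at most $3k-6$ chains. A chain whose two sides are the same face lies inside a bridge structure; I would need to argue that its interior vertices are strict-cell vertices counted correctly, rather than errors. If that fails, I would fall back on counting them directly as part of $N_i$.
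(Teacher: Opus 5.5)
Your proposal follows the paper's proof. Step~1 is the same identification: the inverse-automorphism law is the law of a uniform rooted quadrangulation with $k$ independent uniform vertices conditioned to be distinct and of one color. The ingredients also match: the free action on the $4n$ oriented edges, Proposition~\ref{ms:lem:parity-decoupling} for the color event, an $O(1/n)$ total-variation correction for distinctness, and then null bisectors and continuity of cell masses. Step~2 is the same deterministic comparison, based on pruning and on the fact that a vertex incident to two faces is tied. Your Euler bound for separating edges is a valid variant of the paper's vertex-by-vertex charge. The subgraph formed by these edges has only tied vertices and at most $k$ faces, since deleting edges never creates faces. Hence it has at most $J_n+k-2$ edges, where $J_n\le T_n$ counts vertices incident to more than one face.

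Two points in Step~2 need correcting.

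\emph{The alternative route fails.} The route in your last paragraph does not work: skeleton vertices incident to only one face are not $O(T_n+k)$. For $k=3$, join two loops enclosing faces $2$ and $3$ by a path of length $m$ with all labels equal to one. The $m-1$ interior path vertices lie on the skeleton and touch only face~$1$. In the quadrangulation they are adjacent only to $x_1$, so they are untied, while $T_n=2$. You must therefore use your fallback, which is exactly the paper's bookkeeping. A chain with face $i$ on both sides and $m$ edges contributes $m$ to $a_i$ and has $m-1$ interior vertices, all incident only to face~$i$. So each of the at most $3k-6$ chains and $2k-4$ scheme vertices costs $O(1)$.

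There is also a pruning-free alternative. The edges with face $i$ on both sides are bridges, so they form a forest whose vertices are the vertices incident only to face $i$ plus some tied vertices. Every component contains a tied vertex; otherwise it would be all of $M_n$, a one-face tree. So the number of these edges exceeds the number of face-$i$-exclusive vertices by between $0$ and $J_n$.

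\emph{Exclusive vertices can be tied.} Your assertion that a vertex incident only to face $i$ is ``at least not tied'' is unjustified. The correspondence gives $d_Q(v,x_j)=\ell(v)$ only for faces $j$ incident to $v$. An exclusive vertex can be tied, for instance when one of its successors lies on the boundary between faces $i$ and $j$. This is harmless, because $x_i$ is always among such a vertex's nearest sources. The exclusive and strict counts therefore differ by at most $T_n$, plus one for the source, as in the paper.
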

\begin{proof}
Start with a uniform rooted quadrangulation with $n$ faces, sample $k$ independent uniform vertices, and condition on their being distinct and of the same color. Before conditioning, every rooted quadrangulation with an ordered vertex tuple has the same probability, since every quadrangulation has $n+2$ vertices. The conditioned law is consequently uniform on the admissible rooted marked objects. For an unrooted quadrangulation with ordered sources, the action of its source-preserving automorphism group on the $4n$ oriented roots is free. Forgetting the root therefore gives weight proportional to $4n/|\Aut(Q_n;x_1,\ldots,x_k)|$. Theorem~\ref{ext:general-zero-delay} identifies this group with $\Aut(M_n)$, so the induced law of $M_n$ is exactly the inverse-automorphism law in the statement.

The probability of a collision among the sampled vertices is at most $\binom{k}{2}/(n+2)$. By Proposition~\ref{ms:lem:parity-decoupling}, the same-color event has probability tending to $2^{1-k}>0$, and conditioning on it preserves the marked Brownian-map limit. Further conditioning on distinctness changes that conditional law by $O_k(n^{-1})$ in total variation. Thus $Q_n$, with distances scaled by $c_n=(9/(8n))^{1/4}$, uniform vertex measure and its $k$ ordered sources, converges to the Brownian map with $k$ independent volume-sampled marks.

Let $W_{n,i}$ be the number of vertices in the strict Voronoi cell of $x_i$ in $Q_n$, and let $T_n$ be the number of vertices with more than one nearest source. Null limiting bisectors (Lemma~\ref{ms:lem:bisectors}) and continuity of cell masses (Lemma~\ref{ms:lem:cell-mass-continuity}) now give
\begin{equation}\label{eq:fixed-size-vertex-limit}
 \left(\frac{W_{n,1}}{n+2},\ldots,\frac{W_{n,k}}{n+2}\right)
 \Longrightarrow (V_1^{(k)},\ldots,V_k^{(k)}),\qquad
 \frac{T_n}{n+2}\longrightarrow0\quad\text{in probability}.
\end{equation}
This is the probabilistic part of the argument. The remaining comparison with $a_i$ is deterministic.

Let $N_{n,i}$ count vertices of $M_n$ incident only to face $i$. In the pruning decomposition, each removed tree lies inside one face and contributes equally many edges and new vertices. Since each such edge is counted twice in that face's boundary, its contribution to the half-degree equals its contribution to the exclusive vertex count. After pruning, a degree-two vertex incident only to face $i$ contributes one to both counts. A degree-two vertex incident to two faces contributes at most one to their discrepancy for a fixed face. The other vertices belong to the reduced scheme, whose total degree is at most $6k-12$. It follows that, for a constant $D_k$ depending only on $k$,
\[
 |a_i-N_{n,i}|\le J_n+D_k,
\]
where $J_n$ counts vertices of $M_n$ incident to more than one face.

By Theorem~\ref{ext:general-zero-delay}, every incident face corresponds to a minimizing source. Hence every vertex counted by $J_n$ is a nearest-source tie, so $J_n\le T_n$. Conversely, a nonsource vertex with a unique nearest source $x_i$ is incident only to face $i$. Thus any difference between $N_{n,i}$ and the strict cell count over nonsource vertices is again at most $T_n$. The labelled map omits exactly the $k$ sources; each distinct source belongs to its own strict cell. Restoring $x_i$ adds one to that cell count, and therefore
\[
 |a_i-W_{n,i}|\le 2T_n+D_k+1.
\]
Since $W_{n,i}\le n+2$, changing the normalization from $n$ to $n+2$ gives
\begin{equation}\label{eq:face-vertex-comparison}
 \left|\frac{a_i}{n}-\frac{W_{n,i}}{n+2}\right|
 \le\frac{2T_n+D_k+3}{n}\longrightarrow0
 \quad\text{in probability}.
\end{equation}
Combining \eqref{eq:face-vertex-comparison} with \eqref{eq:fixed-size-vertex-limit} proves the face-area limit.

For clarity, none of the preceding probability laws includes unequal face weights. With $g_i=z w_i$, the series at size $n$ inserts the observable $\prod_i w_i^{a_i}$ under the inverse-automorphism law. Dividing that observable by its expectation would define a different, tilted law; no convergence under such a tilt is needed here.
\end{proof}

\subsection{The counting constant}
The correspondence preserves inverse automorphism weights. It therefore suffices to count rooted quadrangulations with ordered sources, impose the same-color condition, and then forget the root. Each of these operations contributes to the constant.
\begin{Lemma}[The source and rooting constant]\label{ms:lem:v8-counting-normalization}
Let $A_{n,k}$ be the sum of inverse automorphism weights of maps counted by $\mathcal Z_k$ with $n$ edges. For each fixed $k\ge3$, there exists a finite constant $C_k'>0$, depending only on $k$, such that
\[
A_{n,k}\sim\frac{2^{-k}}{\sqrt\pi}12^n n^{k-7/2},\qquad
A_{n,k}\le C_k'12^n n^{k-7/2}.
\]
The asymptotic is as $n\to\infty$, and the inequality holds for every integer $n\ge1$.
\end{Lemma}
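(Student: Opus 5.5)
We count through Theorem~\ref{ext:general-zero-delay}: $A_{n,k}$ equals the sum, over unrooted quadrangulations $Q$ with $n$ faces and $k$ ordered distinct same-color vertices, of $1/|\Aut(Q;x_1,\ldots,x_k)|$. As in the proof of Lemma~\ref{ms:lem:v8-face-area-limit}, the source-preserving automorphism group acts freely on the $4n$ oriented edges. Hence
\[
 A_{n,k}=\frac{R_{n,k}}{4n},
\]
where $R_{n,k}$ counts rooted quadrangulations with $n$ faces together with an ordered $k$-tuple of distinct vertices of one color. Write $Q_n^{\mathrm r}$ for the number of rooted quadrangulations with $n$ faces. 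Then
\[
 R_{n,k}=Q_n^{\mathrm r}\,(n+2)^k\,\Pp(\text{$k$ uniform vertices are distinct and of one color}).
\]
The first step is to recall the classical formula $Q_n^{\mathrm r}=2\cdot3^n(2n)!/(n!(n+2)!)$, which is Tutte's count. In the present setting it also follows from Theorem~\ref{ext:pointed-tree-coding}: there are $2\cdot3^n\,\mathrm{Cat}_n$ inputs, and these biject onto rooted quadrangulations with a distinguished vertex, so dividing by $n+2$ gives the formula. Stirling's formula then yields $Q_n^{\mathrm r}\sim 2\cdot 12^n n^{-5/2}/\sqrt\pi$.

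For the probability factor there are two options. The simplest is to take $\Pp(A_n)\to2^{1-k}$ from Proposition~\ref{ms:lem:parity-decoupling}; the collision probability is $O(n^{-1})$, so it changes nothing. Combining the pieces,
\[
 A_{n,k}\sim\frac{1}{4n}\cdot\frac{2\cdot12^n n^{-5/2}}{\sqrt\pi}\cdot n^k\cdot2^{1-k}
 =\frac{2^{-k}}{\sqrt\pi}12^n n^{k-7/2},
\]
which matches the claimed constant. Alternatively, one can avoid limits by counting exactly. If a quadrangulation has color classes of sizes $b$ and $w$ with $b+w=n+2$, the number of ordered same-color distinct tuples is $(b)_k+(w)_k$. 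This exact form is useful for the uniform bound.

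The inequality for every $n\ge1$ is routine. It follows from $(b)_k+(w)_k\le(n+2)^k$ together with the explicit bound $Q_n^{\mathrm r}\le C\,12^n n^{-5/2}$ for all $n\ge1$; the latter holds because the ratio converges and is positive. Together these give
\[
 A_{n,k}\le\frac{C\,12^n n^{-5/2}(n+2)^k}{4n}\le C_k'12^n n^{k-7/2},
\]
where the factor $(1+2/n)^k\le3^k$ is absorbed into $C_k'$. Positivity of $C_k'$ is trivial.

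I expect the main obstacle to be making sure there is no hidden multiplicity. The automorphism-weight identification in Theorem~\ref{ext:general-zero-delay} must be exactly weight-preserving, and the free action on $4n$ roots needs $k\ge3$ distinct sources, or at least must not fail for the symmetric objects. Freeness actually holds for any orientation-preserving automorphism fixing an oriented edge, so this is fine. A secondary check is that Proposition~\ref{ms:lem:parity-decoupling} addresses exactly the same-color probability under the uniform rooted law, which it does.
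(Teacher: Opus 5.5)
Your proposal is correct and follows the paper's proof essentially step for step. Both arguments write $A_{n,k}=q_n(n+2)^k p_{n,k}/(4n)$ using the free action on the $4n$ oriented roots, count $q_n$ by dividing the $2\cdot 3^n\,\frac{1}{n+1}\binom{2n}{n}$ pointed tree encodings by $n+2$, take $p_{n,k}\to 2^{1-k}$ from the same-color proposition plus the collision bound, and get the uniform bound from at most $(n+2)^k$ source tuples; your exact $(b)_k+(w)_k$ count is a harmless aside the argument does not need.
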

\begin{proof}
Let $q_n$ be the number of rooted planar quadrangulations with $n$ faces. The bijection in Theorem~\ref{ext:pointed-tree-coding} identifies rooted quadrangulations with a distinguished vertex with a plane tree with $n$ edges, $n$ label increments in $\{-1,0,1\}$, and a root-orientation bit. There are $\frac1{n+1}\binom{2n}{n}$ such trees and $2\,3^n$ choices of increments and bit for each tree. A connected rooted map has no nontrivial root-preserving automorphism, so its $n+2$ vertices give $n+2$ distinct pointed rooted maps. Consequently
\[
 (n+2)q_n=2\,3^n\frac1{n+1}\binom{2n}{n},\qquad
 q_n\sim\frac2{\sqrt\pi}12^n n^{-5/2}.
\]
Let $p_{n,k}$ be the probability that $k$ independent uniform vertices of a uniform rooted quadrangulation are distinct and of the same color. Proposition~\ref{ms:lem:parity-decoupling} and the collision bound give $p_{n,k}\to2^{1-k}$. Counting all $(n+2)^k$ ordered tuples, retaining the admissible ones and forgetting the root yields the exact relation
\[
 A_{n,k}=\frac{q_n(n+2)^k p_{n,k}}{4n}.
\]
Indeed, the free automorphism action on the $4n$ oriented roots gives $4n/|\Aut|$ rooted objects for each unrooted marked isomorphism class, and the correspondence preserves that automorphism group. This accounts separately for the source choices, the color condition and the root. Substituting the asymptotics gives
\[
 A_{n,k}\sim\frac{2^{-k}}{\sqrt\pi}12^n n^{k-7/2},
 \qquad A_{n,k}\le C_k'12^n n^{k-7/2}.
\]
The bound follows from the exact formula for $q_n$ and at most $(n+2)^k$ source choices; finitely many small $n$ are absorbed into $C_k'$.
\end{proof}

\section{The critical comparison}\label{sec:comparison}
The fixed-size result determines the limiting Laplace transform
\[
 \varphi(t)=\E\exp\left(-t\sum_{i=1}^k\lambda_iV_i^{(k)}\right),\qquad t\ge0.
\]
It does not yet identify the analytic limit in Theorem~\ref{ext:face-constrained-critical-limit}, because that theorem concerns the generating series summed over all sizes. Put $h=\varepsilon^4/36$ and $\alpha_k=k-5/2>0$. A size $n$ contributes on the scale $t=nh$, and Lemma~\ref{ms:lem:v8-counting-normalization} gives
\[
 12^{-n}A_{n,k}\sim\frac{2^{-k}}{\sqrt\pi}n^{\alpha_k-1}.
\]
Thus the size sum becomes an integral with weight $t^{\alpha_k-1}$. The gamma identity then turns the Laplace transform into a negative-power transform:
\[
 \int_0^\infty t^{\alpha_k-1}\varphi(t)\,dt
 =\Gamma(\alpha_k)\E\left(\sum_{i=1}^k\lambda_iV_i^{(k)}\right)^{-\alpha_k}.
\]
The next proposition justifies this passage, including the contributions from sizes much smaller or much larger than $h^{-1}$.
\begin{Proposition}[Probabilistic normalization of the face series]\label{ms:prop:general-normalization}
For every fixed $k\ge3$ and positive rate vector $\lambda$,
\[
 \varepsilon^{4k-10}\mathcal Z_k(g_1(\varepsilon),\ldots,g_k(\varepsilon))
 \longrightarrow C_k\E\left(\sum_{i=1}^k\lambda_iV_i^{(k)}\right)^{-\alpha_k}.
\]
\end{Proposition}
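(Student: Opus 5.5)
We write the series by size. Since every summand is nonnegative,
\[
\mathcal Z_k(g(\varepsilon))=\sum_{n\ge0}A_{n,k}12^{-n}\,\E\Bigl[\prod_{i=1}^k\bigl(1-\lambda_ih\bigr)^{a_i(M_n)}\Bigr],\qquad h=\varepsilon^4/36,
\]
where $M_n$ has the inverse-automorphism law of Lemma~\ref{ms:lem:v8-face-area-limit}. The prefactor satisfies $\varepsilon^{4k-10}=36^{\alpha_k}h^{\alpha_k}$, so it suffices to show
\[
h^{\alpha_k}\sum_n A_{n,k}12^{-n}\E\Bigl[\prod_i(1-\lambda_ih)^{a_i}\Bigr]\to\frac{2^{-k}\Gamma(\alpha_k)}{\sqrt\pi}\E\Bigl(\sum_i\lambda_iV_i\Bigr)^{-\alpha_k}.
\]
The plan is a Riemann-sum argument with dominated convergence in the variable $t=nh$. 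Define the step function
\[
f_h(t)=h^{\alpha_k-1}A_{n,k}12^{-n}\,\E\Bigl[\prod_i(1-\lambda_ih)^{a_i}\Bigr]\qquad\text{for }t\in[nh,(n+1)h).
\]
Then the left side above is $\int_0^\infty f_h(t)\,dt$, up to the $n=0$ term (and small $n<k-1$ terms). These are finitely many bounded terms multiplied by $h^{\alpha_k}\to0$.

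For the pointwise limit, fix $t>0$ and put $n=\lfloor t/h\rfloor$. Lemma~\ref{ms:lem:v8-counting-normalization} gives $h^{\alpha_k-1}A_{n,k}12^{-n}\to\frac{2^{-k}}{\sqrt\pi}t^{\alpha_k-1}$. Next we write
\[
\prod_i(1-\lambda_ih)^{a_i}=\exp\Bigl(\sum_i a_i\log(1-\lambda_ih)\Bigr)=\exp\Bigl(-(1+O(h))\,nh\sum_i\lambda_i a_i/n\Bigr).
\]
Since $a_i\le n$, the $O(h)$ correction is uniform. Lemma~\ref{ms:lem:v8-face-area-limit} gives $a/n\Rightarrow V^{(k)}$, and the integrand is bounded by one and continuous. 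Hence the expectation converges to $\varphi(t)=\E\exp(-t\sum_i\lambda_iV_i)$, and so $f_h(t)\to\frac{2^{-k}}{\sqrt\pi}t^{\alpha_k-1}\varphi(t)$.

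Domination is the main obstacle. The bound $A_{n,k}\le C'_k12^nn^{k-7/2}$ gives $f_h(t)\le C t^{\alpha_k-1}\E[\cdots]$ for $t\ge h$. Near zero this is integrable because $\alpha_k>0$; the piece $[0,h)$ is the discarded $n=0$ term. At infinity, however, we need decay of $\E\exp(-c\,nh\,\min_i\lambda_i)$ uniformly in $n$, and the expectation is bounded only by one, since $t^{\alpha_k-1}$ is not integrable. The fix is deterministic. Since $\sum_i a_i=n$, we have
\[
\prod_i(1-\lambda_ih)^{a_i}\le(1-\lambda_{\min}h)^n\le e^{-\lambda_{\min}nh}.
\]
So $f_h(t)\le C t^{\alpha_k-1}e^{-\lambda_{\min}(t-h)}$, which is integrable uniformly for small $h$. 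This is exactly what the equal-variable comparison in the introduction suggests. Dominated convergence then gives
\[
\int_0^\infty f_h\to\frac{2^{-k}}{\sqrt\pi}\int_0^\infty t^{\alpha_k-1}\varphi(t)\,dt.
\]

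Finally, apply Tonelli and the gamma identity $\int_0^\infty t^{\alpha-1}e^{-ts}\,dt=\Gamma(\alpha)s^{-\alpha}$. Lemma~\ref{ms:lem:bisectors} gives $\sum_iV_i=1$ a.s., so $s=\sum_i\lambda_iV_i\ge\lambda_{\min}>0$. The limit is therefore finite and equals $\frac{2^{-k}\Gamma(\alpha_k)}{\sqrt\pi}\E\bigl(\sum_i\lambda_iV_i\bigr)^{-\alpha_k}$. Multiplying by $36^{\alpha_k}$ yields the constant $C_k$. One remaining detail is to check that the exponent $k-7/2$ in the counting bound matches $\alpha_k-1$; it does, since $\alpha_k-1=k-7/2$.
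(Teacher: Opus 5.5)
Your proposal is correct and follows essentially the paper's argument. Both decompose the series by size under the inverse-automorphism law, combine the counting asymptotic and uniform bound of Lemma~\ref{ms:lem:v8-counting-normalization} with the face-area limit and the deterministic envelope $\prod_i(1-\lambda_ih)^{a_i}\le e^{-nh\min_i\lambda_i}$, and finish with Tonelli and the gamma integral. The difference is packaging: you apply dominated convergence to a step function in $t=nh$, while the paper truncates to $\delta\le nh\le R$ and uses uniform-on-compacts convergence with Riemann sums. In your pointwise step, state explicitly that replacing $(1+O(h))nh$ by $t$ costs at most $|(1+O(h))nh-t|\max_i\lambda_i\to0$, because $\sum_i\lambda_ia_i/n\le\max_i\lambda_i$; only then does weak convergence of $a/n$ apply to a fixed test function.
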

\begin{proof}
We first express the face series at fixed size using inverse-automorphism weights, then sum the limiting Laplace transforms over the size parameter. The geometric input is Lemma~\ref{ms:lem:v8-face-area-limit}; the counting input is Lemma~\ref{ms:lem:v8-counting-normalization}. Write $A_{n,k}$ for their total. For sizes with $A_{n,k}>0$, let $\E_{n,k}$ denote expectation under the probability assigning a map with $n$ edges mass $1/(A_{n,k}|\Aut|)$. Terms with $A_{n,k}=0$ in the sum below are zero. Lemma~\ref{ms:lem:v8-face-area-limit} gives convergence of the normalized face areas under this law. The parameters $g_i$ do not enter this probability measure: their entire contribution remains inside the expectation below. Lemma~\ref{ms:lem:v8-counting-normalization} also gives the required asymptotic for $A_{n,k}$ and its uniform bound. Since $\sum_i a_i=n$,
\[
 \mathcal Z_k(g(\varepsilon))=\sum_{n\ge1}12^{-n}A_{n,k}
 \E_{n,k}\prod_i(1-\lambda_i\varepsilon^4/36)^{a_i}.
\]
Write $h=\varepsilon^4/36$ and $t=nh$. If $nh$ remains in a compact subset of $(0,\infty)$, Taylor expansion gives, uniformly in the possible area vectors,
\[
 \sum_i a_i\log(1-\lambda_i h)
 =-nh\sum_i\lambda_i(a_i/n)+O(nh^2).
\]
The error tends to zero uniformly there. The face-area convergence therefore gives the limiting factor
$\varphi(t)=\E\exp(-t\sum_i\lambda_iV_i^{(k)})$.
This convergence is uniform on each compact $t$ interval: the functions
$\E_{n,k}\exp(-t\sum_i\lambda_i a_i/n)$ have Lipschitz constants at most $\max_i\lambda_i$, so pointwise convergence and a finite mesh imply uniform convergence. In particular it applies along every sequence $n\to\infty$, $nh\to t>0$.

Here are explicit bounds for the parts of the sum outside such an interval. Set $c=\min_i\lambda_i>0$. For $h$ small enough that every $\lambda_i h<1$, positivity and $\sum_i a_i=n$ give
\[
 0\le\prod_i(1-\lambda_i h)^{a_i}\le e^{-cnh}.
\]
Since $4k-10=4\alpha_k$, the coefficient bound in Lemma~\ref{ms:lem:v8-counting-normalization} yields
\[
 \varepsilon^{4\alpha_k}\sum_{nh<\delta}12^{-n}A_{n,k}
 \le C h^{\alpha_k}\sum_{n<\delta/h}n^{\alpha_k-1}
 \le C'\delta^{\alpha_k}.
\]
Thus the small-size contribution vanishes with $\delta$, precisely because $\alpha_k>0$. The large-size contribution is bounded by a constant times
$h\sum_{nh>R}(nh)^{\alpha_k-1}e^{-cnh}$, which tends uniformly to zero as $R\to\infty$ (restrict, for example, to $0<h\le1$ and compare with the integral of an exponential envelope). On $\delta\le nh\le R$, the counting asymptotic and the preceding uniform convergence give ordinary Riemann sums. Letting first $h\downarrow0$, then $\delta\downarrow0$ and $R\uparrow\infty$, proves that the scaled series tends to
\[
 \frac{2^{-k}36^{\alpha_k}}{\sqrt\pi}
 \int_0^\infty t^{\alpha_k-1}
 \E e^{-t\sum_i\lambda_iV_i^{(k)}}\,dt
 =C_k\E\left(\sum_i\lambda_iV_i^{(k)}\right)^{-\alpha_k}.
\]
Tonelli's theorem and the gamma integral justify the last equality.
\end{proof}

\begin{proof}[Proof of Theorem~\ref{ms:lem:general-bridge}]
Theorem~\ref{ext:face-constrained-critical-limit} and Proposition~\ref{ms:prop:general-normalization} compute the limit of the same rescaled generating series, with the same inverse-automorphism weights and critical parameters. The external theorem gives the finiteness of the integral. Comparing the two limits and relabelling $\{1,\ldots,k\}$ by $S$ proves \eqref{eq:bridge} for every finite label set and every positive rate vector.

For completeness, we give the analytic dependence explicitly. By Lemma~\ref{ms:lem:bisectors}, the nonnegative volumes sum to one almost surely. Fix a positive vector $\lambda^0$ and set $b=\min_i\lambda_i^0>0$ and $A=\sum_i\lambda_i^0V_i$. For $\|h\|_\infty<b/2$, the weighted perturbation $B=\sum_i h_iV_i$ satisfies $|B|/A<1/2$ and $A\ge b$. Hence
\[
 (A+B)^{-\alpha_k}
 =A^{-\alpha_k}\sum_{m=0}^\infty
   \frac{(-1)^m(\alpha_k)_m}{m!}\left(\frac BA\right)^m,
\]
where $(\alpha)_0=1$ and $(\alpha)_m=\alpha(\alpha+1)\cdots(\alpha+m-1)$ for $m\ge1$. The series converges absolutely and uniformly in the volume vector on each smaller neighborhood of $h=0$. Each term is a homogeneous polynomial in $h$, with integrable coefficients. Taking expectations gives a locally convergent power series for $F_S$, and thus proves real analyticity. Equation~\eqref{eq:bridge} transfers this property to the integral. The lower bound $\sum_i\lambda_iV_i\ge\min_i\lambda_i$ also proves directly that $F_S(\lambda)$ is finite.

On the diagonal $\lambda_i=4t$, the weighted sum of the volumes is $4t$ almost surely. Therefore
\[
 \mathcal I_S((t)_{i\in S})
 =3^{-(3k-6)}C_k(4t)^{-(k-5/2)}
 =\frac{\Gamma(k-5/2)}{2^k3^{k-1}\sqrt\pi}\,t^{5/2-k},
\]
which is \eqref{eq:diagonal}. This holds for every $t>0$.
\end{proof}

The same calculation and homogeneity give the equivalent normalization
\[
 \mathcal I_S(\eta)=
 \frac{\Gamma(k-5/2)}{2^k3^{k-1}\sqrt\pi}\,F_S(\eta),
 \qquad \eta\in(0,\infty)^S.
\]
Thus evaluating the integral for unequal parameters would evaluate the corresponding volume transform. No such evaluation or identification of the joint volume distribution is needed for the representation proved here.

\section*{Acknowledgements}

\begingroup
\emergencystretch=2em
The author acknowledges support from the National Key R\&D Program of China (Grant No.~2023YFA1010102) and the National Natural Science Foundation of China (Grant Nos.~12301173 and 12671171).
\par\endgroup

The author acknowledges the use of large language models (OpenAI's GPT-6/6.1), through an iterative prompting process, to support the conceptual development and proof construction in this paper. The author has independently verified all mathematical claims and arguments, and assumes full responsibility for the correctness and academic integrity of the final results.

\bibliographystyle{amsplain}
\bibliography{references}
\end{document}